\makeatletter\@addtoreset{equation}{section}\makeatother
\newtheorem{theorem}{Theorem}[section]
\newtheorem{lemma}[theorem]{Lemma}
\newtheorem{proposition}[theorem]{Proposition}
\newtheorem{assumption}[theorem]{Assumption}
\newtheorem{definition}[theorem]{Definition}
\newtheorem{remark}[theorem]{Remark}
\numberwithin{equation}{section}
\title[Uniqueness of Markov random fields with higher-order dependencies]{Uniqueness of
Markov random fields with higher-order dependencies}
\author{Dorota K\c{e}pa-Maksymowicz}
\address{Instytut Matematyki, Uniwersytet Marii Curie-Sk{\l}odowskiej, 20-031 Lublin, Poland}
\email{dorota.kepa-maksymowicz@mail.umcs.pl}
\author{ Yuri  Kozitsky}
\address{Instytut Matematyki, Uniwersytet Marii Curie-Sk{\l}odowskiej, 20-031 Lublin, Poland}
\email{jurij.kozicki@mail.umcs.pl}
\keywords{Specification; hypergraph; uniqueness; Dobrushin
condition; graph animal; tempered degree growth.}
\subjclass{60G60; 60C05; 60K35; 82B20}%
\begin{document}

\maketitle

\begin{abstract}
Markov random fields on a countable set $\sf V$ are studied. They
are canonically set by a specification $\gamma$, for which the
dependence structure is defined by  a pre-modification $(h_e)_{e\in
{\sf E}}$  -- a consistent family of functions $h_e : S^e\to
[0,+\infty)$, where $S$ is a standard Borel space and $\sf E$ is an
infinite collection of finite $e\subset {\sf V}$. Different $e$ may
contain distinct number of elements, which, in particular, means
that the dependence graph ${\sf H}=({\sf V}, {\sf E})$ is a
hypergraph. Given $e\in {\sf E}$, let $\delta (e)$ be the
logarithmic oscillation of $h_e$. The result of this work is the
assertion that the set of all fields $\mathcal{G}(\gamma)$ is a
singleton whenever $\delta(e)$ satisfies a condition, a particular
version of which can be $\delta(e) \leq \varkappa g(n_{\sf L}(e))$,
holding for all $e$ and some $\sf H$-specific $\varkappa\in (0,1)$.
Here $g$ is an increasing function, e.g., $g(n) = a+\log n$, and
$n_{\sf L}(e)$ is the degree of $e$ in the line-graph ${\sf L}({\sf
H})$, which may grow ad infinitum. This uniqueness condition is
essentially less restrictive than those based on the classical
Dobrushin uniqueness theorem, according to which either of $|e|$,
$n_{\sf L}(e)$ and $\delta(e)$ should be globally bounded. We also
prove that its fulfilment implies that the unique element of
$\mathcal{G}(\gamma)$ is globally Markov.

\end{abstract}

\section{Setup}

There exists a permanent interest to  random fields based on
discrete structures, which can be explained by their numerous
applications in mathematical physics, spatial statistics, image
analysis, and many other sciences, see, e.g.,
\cite{AHK,b2,DS,DS1,Dorlas,GeM,Li,Nor,Ny,Preston,Tj}. A systematic
presentation of the the theory of such fields may be found in
\cite{Ge,Ny,Preston}. In this work, we mostly use Georgii's
monograph \cite{Ge} as the source of notions, facts and notations on
this subject.

Let $\sf V$ be a countable set and $(S, \mathcal{S})$ a measurable
space. A random field on $\sf V$ is a collection of random variables
$(\sigma_x)_{x\in {\sf V}}$ (also called \emph{spins}) defined on
some probability space that take values in $S$ (\emph{single-spin
space}). In the canonical realization, a random field is a
probability measure on $(\Sigma, \mathcal{F})$, where $\Sigma =
S^{\sf V}$ and $\mathcal{F}=\mathcal{S}^{\sf V}$. Typically, the
dependence type of a random field is \emph{specified} by a family
$\gamma=(\gamma_\Lambda)_{\Lambda \in \mathcal{V}}$ of probability
kernels, where $\mathcal{V}$ is the collection of all nonempty
finite subsets $\Lambda \subset {\sf V}$.  For $\Delta \subset {\sf
V}$, let $\mathcal{F}_\Delta$ be the sub-$\sigma$-algebra of
$\mathcal{F}$ generated by the maps $\Sigma \mapsto \sigma_\Delta
=(\sigma_x)_{x\in \Delta}\in S^\Delta$. Then for $\Lambda \in
\mathcal{V}$, the external $\sigma$-algebra  of events outside
$\Lambda$ is $\mathcal{F}_{\Lambda^c}$, $\Lambda^c:={\sf V}\setminus
\Lambda$. A probability measure on $(\Sigma, \mathcal{F})$ is said
to be specified by $\gamma=(\gamma_\Lambda)_{\Lambda \in
\mathcal{V}}$ if it satisfies the condition
\begin{equation}
  \label{G1}
  \mu(A|\mathcal{F}_{\Lambda^c}) = \gamma_\Lambda (A|\cdot) \qquad
  \mu-{\rm almost} \ {\rm surely},
\end{equation}
holding for all $A\in \mathcal{F}$ and $\Lambda \in \mathcal{V}$.
The set of all $\mu$ satisfying (\ref{G1}) is denoted by
$\mathcal{G}(\gamma)$. It is known, see \cite[Proposition 1.24, page
17]{Ge}, that a given $\mu$ belongs to $\mathcal{G}(\gamma)$ if and
only if it solves
\begin{equation}
  \label{G1a}
\mu(A)=  (\mu \gamma_\Lambda)(A) := \int_{\Sigma} \gamma_\Lambda
(A|\sigma)\mu(d \sigma), \qquad A \in \mathcal{F}, \quad \Lambda \in
\mathcal{V},
\end{equation}
known as the Dobrushin-Lanford-Ruelle equation.

In most of the cases, single-spin space $S$ is finite, see e.g.,
\cite{BD,vdBM,CM,GeM,H}. In this work, $(S, \mathcal{S})$ is a
standard Borel space, which means that it is measurably isomorphic
to a complete and separable metric space. An extension of the theory
to nonseparable single-spin spaces can be found in \cite{Dorlas}.
For $(S, \mathcal{S})$ being a standard Borel space, the existence
of measures satisfying (\ref{G1}), (\ref{G1a}) follows by rather
standard arguments, whereas proving uniqueness is a much more
nontrivial accomplishment, even if $S$ is finite.

Let $\chi$ be a probability measure on $(S, \mathcal{S})$. For
$\Lambda \in \mathcal{V}$, by $\chi^\Lambda$ we denote the
corresponding product measure on $(S^\Lambda,
\mathcal{S}^{\Lambda})$. Then the {\it independent} specification
consists of the kernels
\begin{equation}
  \label{G2}
  \gamma^\chi_\Lambda (\cdot| \sigma) = \chi_\Lambda (\cdot|\sigma):= ( \chi^{\Lambda} \times
  \delta_{\sigma_{\Lambda^c}} )(\cdot),
\end{equation}
where $\delta_{\sigma_{\Lambda^c}}$ is the Dirac measure on
$(S^{\Lambda^c}, \mathcal{S}^{\Lambda^c})$, and the unique element
of $\mathcal{G}(\gamma^\chi)$ is the product measure $\chi^{\sf V}$.
Thus, one may expect that uniqueness persists if the dependence
encoded in $\gamma$ is in a sense weak. The typical procedure of
each work on the uniqueness of this kind, including the present one,
is to realize this idea in a given context.

Let $\rho=(\rho_\Lambda)_{\Lambda \in \mathcal{V}}$ be a family of
measurable functions $\rho_\Lambda : \Sigma \to \mathds{R}_{+} :=
[0, +\infty)$ such that $\rho\chi= (\rho_\Lambda
\chi_\Lambda)_{\Lambda \in \mathcal{V}}$ is a specification. Here
$\chi_\Lambda$ is as in (\ref{G2}). Then $\rho$ is called a
$\chi$-modification \cite[page 18]{Ge}. Furthermore, let
$(h_\Lambda)_{\Lambda \in \mathcal{V}}$ be a family of measurable
functions $h_\Lambda : \Sigma \to \mathds{R}_{+} := [0, +\infty)$
which enjoy the following consistency property
\begin{equation}
  \label{G3}
  h_\Lambda (\sigma) h_{\Lambda'} (\sigma') = h_\Lambda (\sigma') h_{\Lambda'}
  (\sigma),
\end{equation}
holding for all $\varnothing \neq \Lambda \subset \Lambda'$,
$\Lambda' \in \mathcal{V}$ and $\sigma, \sigma'\in \Sigma$ such that
$\sigma_{\Lambda^c} = \sigma'_{\Lambda^c}$. Assume also that
\begin{equation}
  \label{G4}
0< \chi_\Lambda (h_\Lambda) < \infty, \qquad  \chi_\Lambda
(h_\Lambda):=  \int_\Sigma h_\Lambda d \chi_\Lambda ,
\end{equation}
holding for all $\Lambda\in \mathcal{V}$. Then $h$ is called a
\emph{pre-modification}. In this case,  $\rho=(h_\Lambda/
\chi_\Lambda(h_\Lambda))_{\Lambda \in \mathcal{V}}$ is a
$\chi$-modification, see \cite[Remark 1.32, page 22]{Ge}.

If the pre-modification $h$ is such that
\begin{equation}
\label{HL} h_\Lambda (\sigma) = \exp\left( \Phi_\Lambda
(\sigma)\right),
\end{equation}
for a certain family of functions $\{\Phi_\Lambda\}_{\Lambda
\in\mathcal{V}}$, the corresponding random field is called a
\emph{Gibbs field}. They determine the \emph{model} one deals with.
In applications related to statistical physics, such functions are
called \emph{interaction potentials}. In the setting of this work,
necessary and sufficient conditions under which the elements of a
given pre-modification $h$ can be written as in (\ref{HL}) were
first obtained in \cite{Kozlov}. Thus, random fields satisfying
these conditions are Gibbs fields. Often, see e.g., \cite[page
9]{GeM} or \cite{H}, interaction potentials have the form
\begin{equation}
  \label{G5}
  \Phi_\Lambda (\sigma) = - \sum_{\{x,y\} \in \mathcal{V}^0_\Lambda} \varphi_{x,y} (\sigma_x, \sigma_y)
  ,
\end{equation}
for suitable symmetric measurable functions $\varphi_{x,y}: S^2 \to
\mathds{R}$, called \emph{binary (or two-body) interaction
potentials}. Here $\mathcal{V}^0$ is a collection of pairs $\{x,y\}$
and $\mathcal{V}_\Lambda^0=\{\{x,y\} \in \mathcal{V}^0: \{x,y\}\cap
\Lambda \neq \varnothing\}$. If one sets $x\sim y$ whenever $\{x,y\}
\in \mathcal{V}^0$, then the collection $\mathcal{V}^0$ determines a
simple graph  -- the \emph{dependence graph} for $h$ (and hence for
the corresponding specification) -- with vertex set $\sf V$ and the
adjacency relation as just mentioned. This dependence graph
determines the properties of the corresponding random fields. For
instance, Markov random fields with trees as dependence graphs have
many specific properties related to this particular feature, see
\cite[Chapter 12]{Ge}. The aforementioned possible `weakness' should
correspond to particular properties of the dependence graph and to
sufficiently small values of the logarithmic oscillations of
$h_\Lambda$, which in the case of (\ref{HL}) corresponds to small
oscillations of the interaction potentials.  For the Ising spin
model ($\sigma_x =\pm 1$ and binary interactions $\varphi_{x,y}
(\sigma_x, \sigma_y) = - a \sigma_x\sigma_y$) on a particular rooted
tree with vertices the degrees of which rapidly grow with distance
to the root, the corresponding Gibbs fields are multiple for any
$a>0$, see \cite{KK}. For a number of models with binary
interactions as in (\ref{G5}) and \emph{finite} single-spin spaces
$S$, a comprehensive analysis of the relationship between
uniqueness/nonuniqueness and the graph structure can be found in
\cite{GeM,H}. We also refer the reader to publications
\cite{CM,GR,Kozlov} for a detailed study of various interconnections
between Markov and Gibbs properties of random fields of this kind.

In several applications, modeling dependence by binary interactions
as in (\ref{G5}) proved insufficient and using higher-order
(many-body) interactions is being suggested, see e.g.,
\cite{Baxter,Can,Gandolfi,GandolfiL,Monroe,Tj}. In view of this, in
the present work we turn to the case where the dependence graph has
edges consisting of more than two elements of $\sf V$, i.e.,  is a
\emph{hypergraph}. Suppose that ${\sf V}$ and $\mathcal{V}$ are as
above, and there is given an infinite collection ${\sf E}\subset
\mathcal{V}$ of distinct subsets $e\subset {\sf V}$, none of which
is a singleton.
\begin{assumption}
  \label{1ass}
There is given a collection $(h_e)_{e\in {\sf E}}$ of measurable
functions $h_e:\Sigma\to \mathds{R}_{+}$ such that each $h_e$ is
$\mathcal{F}_e$-measurable and the following holds
\begin{equation}
  \label{G7}
 \forall e \in {\sf E} \ \  \forall \sigma \in \Sigma \qquad m_e
 \leq h_e (\sigma ) \leq M_e,
\end{equation}
for some $m_e, M_e$ such that $m_e >0$.
\end{assumption}
For $\Lambda\in \mathcal{V}$, set ${\sf E}_\Lambda= \{ e \in {\sf
E}: e\cap \Lambda \neq \varnothing\}$. Then define
\begin{equation}
  \label{G6}
h_\Lambda (\sigma) = \prod_{e\in {\sf E}_\Lambda} h_e(\sigma).
\end{equation}
It is clear that the family $(h_\Lambda)_{\Lambda \in \mathcal{V}}$
has the consistency property as in (\ref{G3}) and hence is a
pre-modification. Thus, for $\chi$ as above, we have that
$\chi_\Lambda (h_\Lambda)
>0$, see (\ref{G4}), and hence $\rho =(h_\Lambda / \chi_\Lambda
(h_\Lambda))_{\Lambda \in \mathcal{V}}$ is a $\chi$-modification.
Thereby, $\gamma = \rho \chi = (\rho_\Lambda \chi_\Lambda)_{\Lambda
\in \mathcal{V}}$ is a specification. Our aim is to establish a
sufficient condition imposed on the collections $\sf E$ and
$(m_e)_{e\in {\sf E}}$, $(M_e)_{e\in {\sf E}}$, under which
$\mathcal{G}(\gamma)$ is a singleton. The fact that
$\mathcal{G}(\gamma) \neq \varnothing$ follows by our assumption
that $(S, \mathcal{S})$ is a standard Borel space, see \cite[Theorem
8.7, page 142]{Ge}. For $\Lambda \in \mathcal{V}$, we  set
\begin{equation*}
  %\label{Ga8}
  {\sf E}^o_\Lambda = \{e \in {\sf E}: e\subset \Lambda\}, \qquad
  \partial {\sf E}_\Lambda = \{ e\in {\sf E}_\Lambda: e\cap
  \Lambda^c \neq \varnothing\},
\end{equation*}
and also
\begin{equation}
  \label{Ga8a}
\partial \Lambda = \bigcup_{e \in \partial {\sf E}_\Lambda} \left(
e \cap \Lambda^c\right).
\end{equation}
Since all $e\in {\sf E}$ are finite, ${\sf E}^o_\Lambda \neq
\varnothing$ for sufficiently big $\Lambda$'s. By (\ref{G6}) it
directly follows that the elements of $\mathcal{G}(\gamma)$ are
\emph{locally} Markov random fields in the sense that
\begin{equation}
  \label{Ga9}
  \mu(A|\mathcal{F}_{\Lambda^c}) =  \mu(A|\mathcal{F}_{\partial
  \Lambda}), \qquad A\in \mathcal{F}
\end{equation}
holding for all $\Lambda \in \mathcal{V}$. Following
\cite{AHK,AHKO,F,Israel}, we say that $\mu\in \mathcal{G}(\gamma)$
is \emph{globally} Markov, if (\ref{Ga9}) holds for all $\Lambda
\subset {\sf V}$. The significance of this property is discussed,
e.g., in \cite{AHK,b2,G,Israel}. For finite single-spin spaces $S$,
the existence of the global Markov property may be related to number
of properties of the corresponding random fields, see, e.g.,
\cite{Coq,G} and also \cite{Israel} where its absence is shown and
discussed. One of our aims is to relate this property to the
uniqueness condition which we are going to derive.

Among the most known tools of proving uniqueness there is the
celebrated Dobrushin criterion, see \cite[Chapter 8]{Ge}. In the
present context, its condition is satisfied if the following holds,
see \cite[Proposition 8.8, page 143]{Ge},
\begin{gather}
  \label{G8}
  \sup_{x\in {\sf V}} \sum_{e\in {\sf E}_x} \left( |e| - 1 \right)
  \delta(e) < 2,
\end{gather}
where
\begin{gather}
\label{G8a}
    {\sf E}_x := \{ e\in {\sf E}: x\in e\},
  \qquad \delta (e) := \log M_e - \log m_e,
\end{gather}
and $|e|$ standing for the cardinality of $e\subset {\sf V}$. If one
assumes uniform boundedness of $\delta(e)$, then the condition in
(\ref{G8}) can be satisfied only in the rather trivial case of
uniform boundedness of both $|e|$ and $|{\sf E}_x|$. In the present
work, instead of that in (\ref{G8}) we obtain (in Theorem \ref{1tm})
a condition (see (\ref{Gb2b}) below) which works also for unbounded
$|e|$, $|{\sf E}_x|$ and $\delta(e)$. In \cite{AHK,AHKO,F}, it was
shown that Dobrushin's uniqueness implies that the unique $\mu\in
\mathcal{G}(\gamma)$ is globally Markov. We prove that a similar
statement is true also in our case: the fulfillment of the
uniqueness condition (\ref{Gb2b}) implies the global Markov property
of the unique $\mu\in \mathcal{G}(\gamma)$.

The rest of the paper has the following structure. In Section
\ref{US}, we introduce all mathematical necessities and then
formulate our main result in Theorem \ref{1tm}. Thereafter, we
provide a number of comments aimed at indicating the significance of
the present result and its role in the context of the theory of
Markov fields of this kind. The same aim is pursued in Section
\ref{S} where we provide two examples. In the first one, we
introduce a model, to a version of which the classical Dobrushin
criterion is not applicable as $|e|$ are unbounded in this case. At
the same time, Theorem \ref{1tm} does work for this model, yielding
a result obtained therein. In the second example, we deal with
random interactions and obtain a result -- Theorem \ref{2tm} -- that
can be considered as an extension of our previous result in
\cite{KK2} to the case of higher-order dependencies. Finally,
Section 4 contains the proof of both mentioned theorems.

\section{The Result}
\label{US}
 As mentioned above, the structure of the dependence graph
corresponding to a given specification predetermines the properties
of $\mathcal{G}(\gamma)$. Thus, we begin by introducing a special
kind of graphs and some related facts  and terminology based on our
previous works \cite{KK1,KK2}. Recall that, for a finite set
$\Lambda$, by $|\Lambda|$ we denote its cardinality. For a subset,
e.g., $\Delta \subset {\sf V}$, by $\Delta^c$ we denote the
complement, i.e., ${\sf V}\setminus \Delta$.

\subsection{Graphs of tempered degree growth}

Let $ G=( V, W)$ be a countably infinite simple graph, i.e., it does
not have loops and multiple edges. We shall also assume that $G$
does not have isolated vertices. Each edge $w \in W$ is identified
with a pair $x,y \in X$, and we write $x\sim y$ if there exists
$w\in W$ such that $w=\{x,y\}$. A path $\vartheta (x,y)$ in $G$ is a
sequence $x_0 , x_1, \dots , x_n$ such that $x_0 = x$, $x_n =y$ and
$x_l \sim x_{l+1}$ for all $l=0, \dots , n-1$. Its length
$|\vartheta(x,y)|$ is set to be $n$. A path is called \emph{simple}
if all $x_l$, $l=0, \dots , n$ are distinct. Then $d_G (x,y) =
\min_{\vartheta(x,y)} |\vartheta (x,y)|$ is a metric on $V$ if one
sets $d_G (x,y) = +\infty$ whenever there is no $\vartheta$
connecting $x$ and $y$. A subgraph, $G'\subset G$, is a graph whose
vertex set, $V'$, and edge set, $W'$, are subsets of $V$ and $W$,
correspondingly. We say that a subset $V'\subset V$ generates a
subgraph $G'$ if the edge set of the latter consists of all those
$w\in W$ which satisfy $w\subset V'$. For a subgraph, $G'$, by
$d_{G'}$ we mean the metric defined as above with the use of
vertices and edges of $G'$ only. It is clear that $d_G (x,y) \leq
d_{G'} (x,y)$ in that case. A graph $G$ (resp. subgraph $G'$) is
said to be connected if $d_G(x,y)<\infty$ (resp. $d_{G'}(x,y)<
\infty$) for all its vertices $x$ and $y$. For $x\in V$, we set $W_x
=\{ w\in W: x\in w\}$. Then the degree of $x$, denoted $n_G (x)$, is
set to be the cardinality of $W_x$. Recall that we assume $n_G(x)>0$
for each $x$. Additionally, we assume that $n_G (x) < \infty$ for
all $x\in V$, i.e. the graph $G$ is \emph{locally finite}. If
\begin{equation}
  \label{Ga1}
  \sup_{x\in V} n_G(x) =: \bar{n}_G < \infty,
\end{equation}
the graph $G$ is said to be of bounded degree. In general, we do not
assume (\ref{Ga1}) to hold for our graphs.

Let $A$ be a finite nonempty subset of $V$, and let $A$  denote also
the subgraph generated by $A$. We call $A$ an \emph{animal} if it is
connected. Let $\vartheta$ be a simple path; by $A_\vartheta$ we
denote the graph generated by the vertices of $\vartheta$. Clearly,
$A_\vartheta$ is an animal. Let $g:\mathds{N}\to \mathds{R}_{+}$ be
a strictly increasing function. For an animal, $A$, set
\begin{equation}
  \label{Ga2}
  \mathfrak{G}(A; g) = \frac{1}{|A|} \sum_{x\in A} g(n_G(x)).
\end{equation}
If the graph is of bounded degree, then $\mathfrak{G}(A;g) \leq
g(\bar{n}_G)$ for all subsets $A\subset V$. On the other hand, if
the graph fails to satisfy (\ref{Ga1}), then $\mathfrak{G}(A;g)$ can
be made arbitrarily big by taking $x$ with big enough $n_G(x)$ (a
hub) and small enough $A$. These observations point to the
possibility of controlling the increase of $n_G(x)$ by means of the
averages as in (\ref{Ga2}). That is, one can say that $G$ has a
tempered degree growth if $\mathfrak{G}(A; g)$ is globally bounded
for specially selected animals of arbitrarily big cardinalities.
This, in particular, means that hubs ought to be sparse in such a
graph, see \cite{KK1} for further explanations. To make this idea
more precise, we take $B_r(x) = \{ y\in V: d_{G} (x,y) \leq r\}$,
$r>0$, $x\in V$, and then define
\begin{equation}
  \label{Ga3}
  \mathcal{A}_r(x) = \{ A \subset B_r(x): x\in A, \ |A| \geq r+1 \ {\rm and} \ A \ {\rm is} \ {\rm an} \
  { \rm animal} \}.
\end{equation}
\begin{definition}
  \label{Ga1df}
For given $g$ and $\bar{a}>0$, let $\mathfrak{G}(A,g)$ be as in
(\ref{Ga2}). The graph $G$ is said to be $(g,\bar{a})$-tempered if
for each $x\in V$, there exists a strictly increasing sequence
$\{N_k\}_{k\in \mathds{N}}\subset \mathds{N}$ such that the
following holds
\begin{equation*}
  %\label{Ga4}
 \sup_{x\in V}\sup_{k\in \mathds{N}} \max_{A\in \mathcal{A}_{N_k} (x)}
\mathfrak{G}(A;g) = \bar{a} .
\end{equation*}
\end{definition}
The property just introduced is closely related to the properties of
simple paths in the corresponding graph. In view of this, we
introduce the following families of them.
\begin{definition}
  \label{BBdf}
By $\varTheta_r(x)$ we denote the family of all simple paths
$\vartheta$ originated at $x\in V$ such that $A_\vartheta \in
\mathcal{A}_r(x)$. That is, $A_\vartheta\subset B_r(x)$ and
$|\vartheta|\geq r$. Furthermore, for $N\geq r+1$, we  set
\begin{equation}
  \label{Ga33}
  \varTheta^N_r(x) = \{ \vartheta\in \varTheta_r(x): |\vartheta|=N -1\}.
\end{equation}
Note that $|A_\vartheta|=N$ whenever $\vartheta \in
\varTheta^N_r(x)$.
\end{definition}
 The advantage of dealing with tempered graphs
can be seen from the following fact, proved in our previous work.
\begin{proposition}
  \label{Ga1pn} \cite[Proposition 4]{KK2}
Let $G$ be $(g,\bar{a})$-tempered for $g(n) = \log n$ and some
$\bar{a}$.  For a given $x\in V$, let $\{N_k\}$ be as in Definition
\ref{Ga1df}. Then
\begin{equation}
  \label{Ga5}
\forall k \ \forall N\geq N_k  \qquad |\varTheta_{N_k}^N (x) |\leq
\exp( \bar{a} N).
\end{equation}
\end{proposition}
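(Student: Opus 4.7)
The plan is to bound $|\varTheta_{N_k}^N(x)|$ by comparing the counting to the total probability mass of a nearest-neighbor simple random walk on $G$, and then to extract from the tempered condition a matching lower bound on the probability of each individual path. First, to each simple path $\vartheta = (x_0, x_1, \ldots, x_{N-1}) \in \varTheta_{N_k}^N(x)$ I would assign the weight
$$p(\vartheta) \,:=\, \prod_{i=0}^{N-2} \frac{1}{n_G(x_i)},$$
which is exactly the probability that the simple random walk started at $x_0 = x$ traces out the sequence $(x_0, \ldots, x_{N-1})$ during its first $N - 1$ steps. Because the paths in $\varTheta_{N_k}^N(x)$ are, in particular, pairwise distinct walks of length $N - 1$ from $x$, summing these probabilities immediately yields
$$\sum_{\vartheta \in \varTheta_{N_k}^N(x)} p(\vartheta) \,\leq\, 1.$$

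The second step is to use the tempered condition with $g(n) = \log n$ to obtain a uniform lower bound on $p(\vartheta)$. For any $\vartheta \in \varTheta_{N_k}^N(x)$, Definition \ref{BBdf} ensures that the underlying animal $A_\vartheta$ satisfies $x \in A_\vartheta$, $A_\vartheta \subset B_{N_k}(x)$, and $|A_\vartheta| = N$. Since $|\vartheta| = N - 1 \geq N_k$ forces $N \geq N_k + 1$, we conclude $A_\vartheta \in \mathcal{A}_{N_k}(x)$; Definition \ref{Ga1df} then gives
$$\sum_{y \in A_\vartheta} \log n_G(y) \,=\, N \cdot \mathfrak{G}(A_\vartheta; g) \,\leq\, \bar{a} N.$$
Because $\{x_0, \ldots, x_{N-2}\} \subset A_\vartheta$ (the endpoint $x_{N-1}$ being the only vertex omitted, with $n_G(x_{N-1}) \geq 1$), exponentiating produces $\prod_{i=0}^{N-2} n_G(x_i) \leq e^{\bar{a} N}$, hence $p(\vartheta) \geq e^{-\bar{a} N}$.

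Combining the two bounds yields $|\varTheta_{N_k}^N(x)| \cdot e^{-\bar{a} N} \leq \sum_\vartheta p(\vartheta) \leq 1$, which is (\ref{Ga5}); the boundary case $N = N_k$, in which $\varTheta_{N_k}^N(x) = \varnothing$, needs no argument. I do not foresee any real obstruction. The cleanest conceptual step is the idea of weighting each path by its random-walk probability so that self-avoidance is exchanged for a trivial $\sum p(\vartheta)\leq 1$, while the only slightly delicate bookkeeping is confirming that every $A_\vartheta$ arising in $\varTheta_{N_k}^N(x)$ actually belongs to $\mathcal{A}_{N_k}(x)$, which is built into Definitions \ref{Ga1df} and \ref{BBdf}.
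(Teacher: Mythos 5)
Your argument is correct. Note that the paper does not actually prove this proposition itself --- it is imported from \cite[Proposition 4]{KK2} --- and the only in-paper trace of the argument is Remark \ref{G1rk}, which records the chain $|\varTheta_{N_k}^N(x)|\leq \max_{\vartheta}\exp\left(N\,\mathfrak{G}(A_\vartheta;g_*)\right)\leq \exp(\bar{a}N)$ without justifying its first inequality. Your random-walk weighting supplies exactly that step: from $\sum_{\vartheta}p(\vartheta)\leq 1$ together with $p(\vartheta)\geq \prod_{y\in A_\vartheta}n_G(y)^{-1}=\exp\left(-N\,\mathfrak{G}(A_\vartheta;g_*)\right)\geq e^{-\bar{a}N}$ you recover precisely the bound $|\varTheta_{N_k}^N(x)|\leq\max_{\vartheta}\prod_{y\in A_\vartheta}n_G(y)$, so the two routes are quantitatively identical. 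The difference is that the cited source obtains the path count by a neighbor-by-neighbor enumeration (each step of a path being one of $n_G(x_i)$ choices), whereas you trade that enumeration for the disjointness of the walk events, which handles self-avoidance and unbounded degrees with no bookkeeping and makes the proof self-contained. The details all check out: distinct simple paths are distinct walks from $x$, so the weights sum to at most $1$; the membership $A_\vartheta\in\mathcal{A}_{N_k}(x)$ is built into Definition \ref{BBdf}, so Definition \ref{Ga1df} applies; dropping the terminal vertex is harmless because $\log n_G(x_{N-1})\geq 0$ (the graph has no isolated vertices); and $\varTheta_{N_k}^{N_k}(x)=\varnothing$ since $|A_\vartheta|\geq N_k+1$.
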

The following family of graphs has the property as in Definition
\ref{Ga1df}. For $x,y\in V$, set  $m_{-} (x,y)=\min\{n_G (x);
n_G(y)\}$. Let $\phi: \mathds{N}_0 \to \mathds{R}_{+}$ be an
increasing function. A graph $G$ is said to belong to the collection
$\mathds{G}_{-}(\phi)$ if it satisfies
\begin{equation}
  \label{Ga6}
d_G (x,y) \geq \phi (m_{-} (x,y)),
\end{equation}
whenever $m_{-} (x,y) \geq n_*$ for a $G$-specific $n_*>0$. By
virtue of (\ref{Ga6}), in such graphs hubs should be sparse. In a
slightly different form, objects of this kind appeared in \cite{BD}
as a tool of proving uniqueness for fields with random binary
interactions, i.e., where $h_\Lambda$ are as in (\ref{G5}) with
random $\varphi_{x,y}$, see also \cite{KK,KK2} for more on this
subject. Their temperedness is established by the following
statement.
\begin{proposition}
  \label{Ga2pn} \cite[Proposition 1]{KK2}
Assume that there exists a strictly increasing sequence $\{t_k
\}_{k\in \mathds{N}}\subset \mathds{N}$, $t_k \to  +\infty$, such
that $g$ and $\phi$ satisfy
\begin{equation}
  \label{Ga7}
  \sum_{k=1}^\infty \frac{g(t_{k+1})}{\phi (t_k)} =: \bar{b} <\infty.
\end{equation}
Then each $G\in \mathds{G}_{-}(\phi)$ is $(g,2\bar{b})$-tempered.
\end{proposition}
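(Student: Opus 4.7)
The plan is to estimate the sum $\sum_{y\in A} g(n_G(y))$ via a layer-cake decomposition along the threshold sequence $\{t_k\}$, combined with a packing bound for the super-level sets of $n_G$ restricted to the animal $A$. The packing step is the heart of the argument: for an animal $A$ with $|A|=N$, the set $\Phi_k(A):=\{y\in A:\, n_G(y)\geq t_k\}$ satisfies
\[
|\Phi_k(A)|\leq 1+\frac{2(N-1)}{\phi(t_k)}, \qquad t_k\geq n_*.
\]
To prove this, I would fix a spanning tree $T$ of $A$ and traverse it via a depth-first closed tour of length $2(N-1)$. Enumerating the vertices of $\Phi_k(A)$ as $y_1,\dots,y_m$ in the order of their first visits, each consecutive segment of the tour realises a walk from $y_i$ to $y_{i+1}$ and therefore has length at least $d_T(y_i,y_{i+1})\geq d_G(y_i,y_{i+1})\geq \phi(t_k)$ by (\ref{Ga6}), applied with $m_-(y_i,y_{i+1})\geq t_k$. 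Summing over $i$ gives $(m-1)\phi(t_k)\leq 2(N-1)$.

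With the packing estimate in hand, I would use the monotonicity of $g$ to write
\[
g(n_G(y))\leq g(t_1)+\sum_{k\geq 1}\bigl[g(t_{k+1})-g(t_k)\bigr]\mathbf{1}[n_G(y)\geq t_k],
\]
sum over $y\in A$, and insert the packing bound to obtain
\[
\sum_{y\in A}g(n_G(y))\leq g(t_1)|A|+2(|A|-1)\sum_{k\geq 1}\frac{g(t_{k+1})-g(t_k)}{\phi(t_k)}+\sum_{k:\Phi_k(A)\neq\varnothing}\bigl[g(t_{k+1})-g(t_k)\bigr].
\]
Using $g(t_{k+1})-g(t_k)\leq g(t_{k+1})$ bounds the middle sum by $|A|\bar{b}$, so that with the factor $2$ in front it matches the advertised constant $2\bar{b}$ in the conclusion. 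The telescoping tail is at most $g(D_A)$ with $D_A:=\max_{y\in A}n_G(y)$, so dividing by $|A|$ yields
\[
\mathfrak{G}(A;g)\leq 2\bar{b}+g(t_1)+\frac{g(D_A)}{|A|}.
\]

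It remains to select the sequence $\{N_j\}$ at each $x\in V$ so that the error terms $g(t_1)+g(D_A)/|A|$ can be absorbed (note that for the natural choice $g(n)=\log n$ with $t_1=1$ one already has $g(t_1)=0$). Since $A\in\mathcal{A}_{N_j}(x)$ forces $A\subset B_{N_j}(x)$ and $|A|\geq N_j+1$, one has $D_A\leq M_j(x):=\max_{y\in B_{N_j}(x)}n_G(y)$, and it suffices to choose $\{N_j\}$ strictly increasing with $g(M_j(x))/(N_j+1)\to 0$. The sparsity property (\ref{Ga6}) controls the growth of $M_j(x)$: a vertex of degree $n$ can appear in $B_{N_j}(x)$ only when $\phi(n)\leq N_j$, i.e.\ only for $n\leq \phi^{-1}(N_j)$, so $g(M_j(x))\leq g(\phi^{-1}(N_j))$, which by the summability (\ref{Ga7}) is negligible compared to $N_j$ along a sufficiently rapidly growing subsequence. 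The main technical obstacle is exactly this diagonalization: one must choose $\{N_j\}$ strictly increasing, $x$-dependent, and rapidly enough so that the error terms are rendered uniformly small, ensuring that the double supremum in Definition~\ref{Ga1df} does not exceed $2\bar{b}$.
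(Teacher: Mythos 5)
The paper does not actually prove this proposition itself --- it is imported verbatim from \cite[Proposition 1]{KK2} --- so there is no in-paper proof to compare against; I therefore judge your argument on its own terms. Your layer-cake decomposition along $\{t_k\}$ and the packing bound $|\Phi_k(A)|\leq 1+2(N-1)/\phi(t_k)$ (closed spanning-tree tour of length $2(N-1)$, combined with the mutual separation of hubs forced by (\ref{Ga6})) are correct, and they are essentially the right mechanism. The difficulty of the proposition, however, is concentrated in your final ``absorption'' step, and there the argument has a genuine gap.

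The claim that a vertex of degree $n$ can lie in $B_{N_j}(x)$ only if $\phi(n)\leq N_j$ misreads (\ref{Ga6}): the lower bound on $d_G(x,y)$ is $\phi(m_{-}(x,y))$ with $m_{-}(x,y)=\min\{n_G(x),n_G(y)\}$, so if $n_G(x)$ is small (in particular below $n_*$) the condition places no restriction at all on the degrees of vertices near $x$; an arbitrarily large hub may sit at distance $1$ from $x$. Hence $D_A\leq\phi^{-1}(N_j)$ fails, and with it your control of the error term $g(D_A)/|A|$. Nor can this be repaired by ``choosing $N_j$ rapidly growing'': $M_j(x)=\max_{y\in B_{N_j}(x)}n_G(y)$ itself increases with $N_j$, so the selection is circular, and one must genuinely prove that for each $x$ there exist arbitrarily large $r$ with $g(M_r(x))$ small compared to $r$. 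This is exactly where the $x$-dependent sequence $\{N_k\}$ in Definition \ref{Ga1df} earns its keep, and it needs a separate argument (using that any two vertices of degree at least $t$ are at distance at least $\phi(t)$, so that record degrees inside $B_r(x)$ cannot grow too fast with $r$); your proposal treats it as a routine diagonalization. Two further inaccuracies: the telescoping tail is bounded by $g(t_{K+1})$, where $K$ is the largest index with $t_K\leq D_A$, which can greatly exceed $g(D_A)$ when $\{t_k\}$ grows fast; and the additive constant $g(t_1)$ does not disappear in general, so what you actually obtain is $2\bar{b}+g(t_1)+o(1)$ rather than $2\bar{b}$ --- you need either $g(t_1)=0$ (as for $g=\log$ with $t_1=1$, the only case the present paper uses, cf.\ Remark \ref{G1rk}) or a sharper bookkeeping of the lowest degree layer.
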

Note that the rooted tree used in \cite{KK}, for which Gibbs states
of an Ising model are multiple for all nonzero interactions, is not
tempered as the degree of a given $x$ is $l!$, where $l$ is its
distance to the root. Then the neighbor of this $x$ located at
distance $l+1$ from the root has degree $(l+1)!$, i.e., is even
bigger hub than $x$.
\begin{remark}
  \label{G1rk}
Let $G$ be $(g,\bar{a})$-tempered with some $\bar{a}>0$ and $g(n)
\geq g_* (n) := \log n$. As follows from the proof of
\cite[Proposition 4]{KK2}, the cardinality of the set defined in
(\ref{Ga33}) satisfies
\begin{gather*}
 % \label{Ga7a}
|\varTheta_{N_k}^N (x) |\leq \max_{\vartheta \in \varTheta_{N_k}^N
(x)} \exp\left(N \mathfrak{G}(A_\vartheta;g_*)  \right)\leq
\max_{\vartheta \in \varTheta_{N_k}^N
(x)} \exp\left(N \mathfrak{G}(A_\vartheta;g)\right)\\[.2cm] \nonumber \leq \max_{A \in \mathcal{A}_{N_k}^N
(x)} \exp\left(N \mathfrak{G}(A;g) \right) \leq \exp\left(\bar{a} N
\right).
\end{gather*}
That is, $g_*$ is the smallest known $g$ such that the $g$-tempered
graphs admit estimates as in (\ref{Ga5}).
\end{remark}

\subsection{Formulating the result}

As already mentioned, the specification $\gamma$ we are going to
deal with is defined by a family $(h_e)_{e\in {\sf E}}$ satisfying
Assumption \ref{1ass}. Let us turn now to the hypergraph structure
of the underlying set ${\sf V}$ defined by $\sf E$. Here we mostly
employ the terminology of \cite{HypG}. In this context, each $e\in
{\sf E}$ serves as a hyperedge, `connecting' its elements $x,y,z,
\dots \in e$. We denote this dependence hypergraph by ${\sf H}$; its
vertex and edge sets are ${\sf V}$ and $\sf E$, respectively. For
$\Delta \subset {\sf V}$, we then set
\begin{equation}
  \label{Gb0}
  \partial \Delta = \{ y\in \Delta^c: \exists x \in \Delta \ \exists
  e \in {\sf E} \ \{x,y\} \subset e\},
\end{equation}
which is just an extension of (\ref{Ga8a}) to infinite subsets. For
$x\in {\sf V}$, we set
\begin{equation}
  \label{Gb}
 n_{\sf H} (x) = |{\sf E}_x
  |,
\end{equation}
where ${\sf E}_x$ is as in (\ref{G8a}). Note that ${\sf E}_x$ and
$n_{\sf H} (x)$ are the edge neighborhood and the edge degree of
$x$, respectively. We will assume that $\sf H$ is locally finite,
i.e.,
\begin{equation*}
  %\label{Gb1}
  \forall x\in {\sf V} \qquad n_{\sf H} (x) < \infty.
\end{equation*}
A convenient way of describing hypergraphs is to use their
line-graphs, see \cite[Sect. 2.1]{HypG}. For a given hypergraph $\sf
H$, its line-graph ${\sf L}({\sf H})$ has $\sf E$ as the vertex set,
and $e, e' \in {\sf E}$ are declared incident if $e\cap e' \neq
\varnothing$. For a given $e\in {\sf E}$, we then set
\begin{equation}
  \label{Gb2}
  \mathcal{E}_e = \{ e'\in {\sf E}: e'\sim e\}, \qquad n_{\sf L}(e)
  = |\mathcal{E}_e|,
\end{equation}
which is the vertex neighborhood and the vertex degree of $e$ in
${\sf L}({\sf H})$, respectively. Let $\vartheta$ be a simple path
in ${\sf L}({\sf H})$ and ${\sf A}_\vartheta$ the corresponding
animal. Set, cf. (\ref{Ga2}),
\begin{equation}
  \label{Gb2a}
\mathfrak{D}(\vartheta) = \frac{1}{|{\sf A}_\vartheta|} \sum_{e\in
{\sf A}_\vartheta} \log \left(e^{2\delta(e)}-1 \right),
\end{equation}
where $\delta (e)$ is the logarithmic oscillation of $h_e$ defined
in (\ref{G8a}). For $e\in {\sf E}$, by $\mathcal{A}_r(e)$ and
$\varTheta_r(e)$ we denote the families of animals and paths in
${\sf L}({\sf H})$, respectively, defined according to (\ref{Ga3})
and Definition \ref{BBdf}. Then the temperedness of ${\sf L}({\sf
H})$ is set according to Definition \ref{Ga1df} with the use of
$\mathcal{A}_r(e)$. Now we are at a position to state our main
result.
\begin{theorem}
  \label{1tm}
Let the pre-modification $h$, and thus the specification $\gamma$,
be such that the line-graph ${\sf L}({\sf H})$ be
$(g,\bar{a})$-tempered with $g(n) \geq g_* (n) = \log n$ and a
certain $\bar{a}>0$. Then $\mathcal{G}(\gamma)$ is a singleton
whenever,
\begin{equation}
  \label{Gb2b}
\sup_{e\in {\sf E}}  \sup_{k\in \mathds{N}}\max_{\vartheta\in
\varTheta_{N_k}(e)}\mathfrak{D}({\vartheta}) \leq -( \bar{a} +
\epsilon),
\end{equation}
holding for some $\epsilon >0$. In this case, the unique $\mu\in
\mathcal{G}(\gamma)$ is globally Markov, i.e., such that
$\mu(\cdot|\mathcal{F}_{\Delta^c} ) =
\mu(\cdot|\mathcal{F}_{\partial \Delta} )$ $\mu$-almost surely,
holding for all $\Delta \subset {\sf V}$, see (\ref{Gb0}).
\end{theorem}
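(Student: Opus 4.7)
The plan is to establish uniqueness via an oscillation estimate in the spirit of Dobrushin, but with the propagation of influence counted along paths in the line-graph ${\sf L}({\sf H})$ rather than paths in a vertex-based dependence graph; the global Markov property then follows from the same estimate applied to conditional specifications, as in \cite{AHK,AHKO,F}. For the first reduction, I take $\mu,\mu'\in\mathcal{G}(\gamma)$ and fix a cylinder $A\in\mathcal{F}_\Delta$ with $\Delta\in\mathcal{V}$. Iterating the DLR equation (\ref{G1a}) for any $\Lambda\supset\Delta$ gives
\begin{equation*}
\mu(A)-\mu'(A)=\int\!\!\int\bigl[\gamma_\Lambda(A|\sigma)-\gamma_\Lambda(A|\sigma')\bigr]\mu(d\sigma)\,\mu'(d\sigma'),
\end{equation*}
and by the local Markov property (\ref{Ga9}) the integrand depends on $\sigma,\sigma'$ only through $\sigma_{\partial\Lambda},\sigma'_{\partial\Lambda}$. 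It therefore suffices to show that $\sup_{\sigma,\sigma'}|\gamma_{\Lambda_N}(A|\sigma)-\gamma_{\Lambda_N}(A|\sigma')|\to 0$ along a carefully chosen exhausting sequence $\{\Lambda_N\}$.

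The quantitative input from (\ref{G7}) is a one-hyperedge total-variation bound. Changing a single boundary vertex $y$ in $\sigma$ alters the density $\rho_\Lambda=h_\Lambda/\chi_\Lambda(h_\Lambda)$ only through the factors $h_e$ with $y\in e\in\partial{\sf E}_\Lambda$; each such factor varies by a ratio in $[e^{-\delta(e)},e^{\delta(e)}]$ both in the numerator and in the partition function, so the resulting density ratio lies in $[e^{-2\delta(e)},e^{2\delta(e)}]$. A short calculation shows that, for each affected hyperedge $e$, the optimal coupling of the conditional distributions on the spins of $e$ can disagree with probability at most $\tfrac12(e^{2\delta(e)}-1)$. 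This is exactly the weight appearing in $\mathfrak{D}(\vartheta)$, see (\ref{Gb2a}).

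Next, I run a disagreement cascade through ${\sf L}({\sf H})$. Factoring $h_{\Lambda_N}$ as a product over $e\in{\sf E}_{\Lambda_N}$ as in (\ref{G6}) and telescoping the change $\sigma_{\partial\Lambda_N}\leadsto\sigma'_{\partial\Lambda_N}$ one vertex at a time, each single-vertex discrepancy enters the kernel only through hyperedges $e\in\partial{\sf E}_{\Lambda_N}$ containing that vertex, and then spreads only to hyperedges $e'$ adjacent to $e$ in ${\sf L}({\sf H})$. Iterating the one-hyperedge bound along such cascades yields
\begin{equation*}
\sup_{\sigma,\sigma'}\bigl|\gamma_{\Lambda_N}(A|\sigma)-\gamma_{\Lambda_N}(A|\sigma')\bigr|\le C\sum_{\vartheta}\prod_{e\in{\sf A}_\vartheta}\bigl(e^{2\delta(e)}-1\bigr),
\end{equation*}
where $C$ is an absolute combinatorial constant and the sum runs over simple paths $\vartheta$ in ${\sf L}({\sf H})$ starting at a hyperedge meeting $\Delta$ and reaching $\partial{\sf E}_{\Lambda_N}$. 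Now choose $\Lambda_N$ so that every hyperedge intersecting $\Delta$ sits at ${\sf L}({\sf H})$-distance at least $N_k$ from $\partial{\sf E}_{\Lambda_N}$, which is permitted because ${\sf L}({\sf H})$ is locally finite. Each contributing $\vartheta$ then lies in $\varTheta_{N_k}^N(e)$ for some $N\ge N_k$ and some $e$ adjacent to $\Delta$. By (\ref{Gb2a}) the product equals $\exp(N\mathfrak{D}(\vartheta))\le e^{-N(\bar a+\epsilon)}$ under (\ref{Gb2b}), while Proposition \ref{Ga1pn} bounds the number of such $\vartheta$ by $e^{\bar a N}$. Summing in $N$ produces a geometric tail $\sum_{N\ge N_k}e^{-\epsilon N}\to 0$ as $k\to\infty$, which forces $\mu=\mu'$.

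For the global Markov property I would verify (\ref{Ga9}) for arbitrary $\Delta\subset{\sf V}$ along the lines of \cite{AHK,AHKO,F}: for finite $\Lambda\subset\Delta$, introduce the specification $\gamma^{\Delta}$ obtained by freezing $\sigma_{\Delta^c}$, observe that Assumption \ref{1ass} and (\ref{Gb2b}) pass to the inherited hyperedge system (the oscillations $\delta(e)$ are unchanged, and temperedness of ${\sf L}({\sf H})$ descends to induced subgraphs), and apply the uniqueness just proved to conclude that $\mu(\cdot|\mathcal{F}_{\Delta^c})$ is specified by $\gamma^\Delta$ and therefore $\mathcal{F}_{\partial\Delta}$-measurable. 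The chief obstacle is the cascade estimate of the previous paragraph: because a single vertex can lie in many hyperedges and each hyperedge contains several vertices, the telescoping must be arranged so that every disagreement is charged to exactly one simple path in ${\sf L}({\sf H})$, with the correct weight per hyperedge and without double-counting or losing the exponent $2\delta(e)$. Once this combinatorial bookkeeping is secured, the input from Propositions \ref{Ga1pn}--\ref{Ga2pn} and the transfer to the global Markov property are rather routine.
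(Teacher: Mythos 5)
Your overall architecture is the right one and matches the paper's endgame: reduce uniqueness to a uniform estimate on $\sup_{\sigma,\sigma'}|\gamma_{\Lambda_k}(F^A_x|\sigma)-\gamma_{\Lambda_k}(F^A_x|\sigma')|$, bound this by a sum over simple paths in ${\sf L}({\sf H})$ from a hyperedge at $x$ to the boundary sphere with weight $\prod_{e\in{\sf A}_\vartheta}(e^{2\delta(e)}-1)=\exp(N\mathfrak{D}(\vartheta))$, and close with Proposition \ref{Ga1pn} and (\ref{Gb2b}); the global Markov property is then obtained, as you say and as the paper does, by proving uniqueness for all frozen specifications $\gamma^{\Delta,\omega}$ at once (strong uniqueness) and invoking F\"ollmer's argument. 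But the step you flag as ``combinatorial bookkeeping to be secured'' is in fact the entire content of the proof, and the route you sketch toward it --- telescoping the boundary condition one vertex at a time and iterating an optimal-coupling bound of $\tfrac12(e^{2\delta(e)}-1)$ per hyperedge --- is not justified and is not how the estimate is obtained. A vertex-by-vertex Dobrushin-style iteration naturally produces a sum over \emph{walks} weighted by matrix products of interdependence coefficients, together with multiplicities counting how many hyperedges each vertex lies in and how many vertices each hyperedge contains; controlling those multiplicities is exactly what forces the global boundedness of $|e|$ and $n_{\sf L}(e)$ in the classical criterion (\ref{G8}) that the theorem is designed to avoid. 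Nothing in your sketch explains why the cascade collapses to a sum over \emph{simple} paths with no extra combinatorial factors.

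The paper gets the path-sum bound by a quite different, purely algebraic device. One writes the difference of kernels as a single symmetrized double integral (\ref{Gc9}) whose integrand contains the antisymmetric factor $\mathds{1}_A(\xi_x)-\mathds{1}_A(\eta_x)$, normalizes $\bar{h}_e=h_e/m_e\ge 1$ so that $\Gamma_e(\xi,\eta)=\bar{h}_e(\xi_e)\bar{h}_e(\eta_e)-1\ge 0$, and expands $\prod_e(1+\Gamma_e)=\sum_{\sf C}\Gamma_{\sf C}$ as in (\ref{Gc12}). The antisymmetry kills every cluster ${\sf C}$ whose connected component through $e_x$ fails to reach the boundary hyperedges, so only clusters containing a simple path from $e_x$ to ${\sf S}_{N_k}(e_x)$ survive; from each surviving cluster one extracts such a path as in (\ref{Gc15}), bounds $\Gamma_e\le e^{2\delta(e)}-1$, and resums the complementary factors against the two partition functions, which is what removes any constant $C$ and any overcounting. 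This is where the positivity $\Gamma_e\ge 0$ and the product structure (\ref{G6}) are used essentially, and none of it appears in your proposal. Your remark that temperedness ``descends to induced subgraphs'' for the frozen specification is also unnecessary and somewhat suspect: in the paper the frozen spins simply become constants inside the factors $h_e$, the relevant paths and animals always live in the full line-graph ${\sf L}({\sf H})$, and no induced subgraph is ever introduced.
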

The strength of the result just stated is illustrated in Section
\ref{S} below where we obtain uniqueness for a model to which
Dobrushin's method based on (\ref{G8}) is not applicable. The proof
of Theorem  \ref{1tm} follows in Section \ref{SS}. Let us turn now
to discussing it. The essential aspects of this statement are:
\begin{itemize}
  \item[(i)] According to the uniqueness condition in (\ref{Gb2b}), the
  dependencies encoded in $\gamma$
 should be weak `in average'. This makes it applicable also in cases
 where the dependencies are `irregular' or random.
\item[(ii)] The just mentioned irregularity may characterize both the dependence graph and the interaction strength.
That is, neither $n_{\sf L}(e)$ nor $|e|$ are supposed to be
  bounded, i.e., satisfying the corresponding versions of
  (\ref{Ga1}). We allow $\delta(e)$ to be unbounded as well.
 \item[(iii)] The condition in (\ref{Gb2b}) is imposed on families  $\varTheta_r(e)$ of simple
 paths in ${\sf L}({\sf H})$ of lengths satisfying a certain $e$-dependent condition -- not for all $r$.
 According to it, each $\vartheta \in \varTheta_r(e)$
 may have $e'$ with an arbitrarily big
  $\delta(e')$,
  `diluted' by $e''$, for which $e^{2\delta(e'')} - 1 < 1$.
\end{itemize}
The  Dobrushin uniqueness criterion was formulated in \cite{D} as a
condition of weak dependence of $\gamma_{\{x\}}(\cdot|\sigma)$ on
$\sigma_y$, $y\sim x$. In \cite{DS}, this condition was called
$C_{x}$ condition, and the approach was extended by formulating
analogous conditions $C_\Lambda$ for finite $\Lambda$ covering the
underlying set. Soon after, Dobrushin's approach became a
cornerstone in the theory of Gibbs random fields, see, e.g., the
bibliography in \cite{DS}. In particular, under the condition of a
complete analyticity of the interaction potentials, see
\cite{DS1a,DS1,Mart}, uniqueness was shown to hold. It should be
noted, however, that completely analytic potentials should possess
some uniformity, e.g., be translation invariant. Moreover, in the
mentioned works the single-spin space $S$ is finite, (see also
\cite{Israel0} where $S$ is just compact), which means that the
arguments used in \cite{DS1a,DS}, or in \cite{Israel0,Mart}, should
be essentially modified to be applicable in the present context
where $S$ is just a standard Borel space.

For models with finite $S$, there has been elaborated a more
efficient technique of proving uniqueness than that based on
Dobrushin's condition, see \cite{vdB,vdBM} and also \cite[Chapter
7]{GeM}. In this technique, uniqueness of a Gibbs random field on a
given graph is obtained by showing absence of the bond Bernoulli
(disagreement) percolation on the same graph, see \cite[Theorem
1]{vdBM} or \cite[Theorem 7.2, page 76]{GeM}. In our case, the key
ingredient of the theory is the estimate of the number of paths
given in Proposition \ref{Ga1pn}, see also Remark \ref{G1rk}.
According to (\ref{Ga5}), the percolation threshold $p_c$ for the
Bernoulli site percolation is just $e^{-\bar{a}}$, which can readily
be obtained as in (\ref{Gc19}) below. That is, the main aspect of
our approach is that we work directly with the underlying graph, the
properties of which give rise to both non-percolation and uniqueness
of Markov random fields.

As might follow from the condition in (\ref{G8}), the original
Dobrushin criterion is universal as it can be used to random fields
with two-body or multi-body interactions, which may also have long
range. It is for a special type of such interactions, this criterion
is sharp, see \cite[Theorem 16.27]{Ge}.  We hope that, for binary
interactions with finite range, our approach can yield more refined
uniqueness conditions if one takes  $e$ selected in an `optimal'
way.
 Another direction where an appropriate modification of our approach
can be of use is studying random fields corresponding to
hierarchically arranged sets $\sf E$, see, e.g., \cite{BM}. We plan
to turn to these issues in a forthcoming work.

\subsection{Further facts and comments}

The following statement provides a more explicit version of the
uniqueness condition in (\ref{Gb2b}).
\begin{proposition}
  \label{Gb1pn}
Let $\sf H$, $g$ and $\bar{a}$ be as in Theorem \ref{1tm}.
 Assume that, for some $\epsilon\in (0,1)$, there exists $\varkappa \in
 (0,1)$, dependent on  $\bar{a}$ and $\epsilon$ only,  such that the
 following holds
 \begin{equation}
   \label{H}
\forall e\in {\sf E} \qquad \delta (e) \leq \varkappa [\bar{a} + g(
n_{\sf L}(e))].
 \end{equation}
Then (\ref{Gb2b}) is satisfied with this $\epsilon$.
\end{proposition}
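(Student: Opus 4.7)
The plan is to bound $\log(e^{2\delta(e)}-1)$ pointwise by a sum of two contributions: one behaving like $\log(2\delta)$, which is very negative when $\delta$ is small and combines favorably with a Jensen-type averaging, and one linear in $\delta$, which is small as soon as $\varkappa$ is small. The starting point is the elementary estimate
\[
e^{2\delta}-1 \;=\; \int_0^{2\delta} e^{t}\,dt \;\leq\; 2\delta\,e^{2\delta}, \qquad \delta\geq 0,
\]
which, upon taking logarithms, yields $\log(e^{2\delta(e)}-1)\leq \log(2\delta(e))+2\delta(e)$. (Edges with $\delta(e)=0$ have $h_e$ constant and may be discarded from $\sf E$ without affecting $\gamma$, so we may assume $\delta(e)>0$ throughout.)

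Next I would substitute the hypothesis (\ref{H}) into both terms; using monotonicity of the logarithm, this produces an upper bound of the form $\log(2\varkappa)+\log(\bar a+g(n_{\sf L}(e)))+2\varkappa(\bar a+g(n_{\sf L}(e)))$. Averaging this over $e\in {\sf A}_\vartheta$ for $\vartheta \in \varTheta_{N_k}(e_{0})$ and applying Jensen's inequality to the concave function $x\mapsto \log(\bar a+x)$, the middle contribution is controlled by $\log(\bar a+\mathfrak{G}({\sf A}_\vartheta;g))$. Since $A_\vartheta\in\mathcal{A}_{N_k}(e_{0})$ by Definition \ref{BBdf} and ${\sf L}({\sf H})$ is $(g,\bar a)$-tempered (Definition \ref{Ga1df}), $\mathfrak{G}({\sf A}_\vartheta;g)\leq \bar a$. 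The last term is similarly dominated by $4\varkappa\bar a$, producing a uniform estimate of the shape
\[
\mathfrak{D}(\vartheta) \;\leq\; \log(2\varkappa)+\log(2\bar a) + 4\varkappa\bar a \;=\; \log(4\varkappa\bar a) + 4\varkappa\bar a,
\]
which is independent of $\vartheta$, of $e_{0}$, and of $k$.

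Since $\log(4\varkappa\bar a)\to -\infty$ and $4\varkappa\bar a\to 0$ as $\varkappa\to 0^{+}$, for any prescribed $\bar a>0$ and $\epsilon>0$ there exists $\varkappa=\varkappa(\bar a,\epsilon)\in (0,1)$ small enough that the right-hand side is at most $-(\bar a+\epsilon)$, which is exactly (\ref{Gb2b}). I do not anticipate a serious technical obstacle; the only delicate step is choosing the splitting $\log(e^{2\delta}-1)\leq \log(2\delta)+2\delta$ so that one piece feeds into Jensen via concavity of $x\mapsto\log(\bar a+x)$ and the other into the temperedness bound $\mathfrak{G}(A_{\vartheta};g)\leq\bar a$ directly.
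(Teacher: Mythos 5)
Your proof is correct and follows essentially the same route as the paper's: both start from $e^{2\delta}-1\le 2\delta e^{2\delta}$, insert (\ref{H}), average over ${\sf A}_\vartheta$ using $\mathfrak{G}({\sf A}_\vartheta;g)\le\bar a$ from temperedness, and then take $\varkappa$ small depending only on $\bar a$ and $\epsilon$. The only divergence is that you absorb the $\log(2\delta(e))$ term via Jensen's inequality for the concave $x\mapsto\log(\bar a+x)$, whereas the paper folds it into a linear term via the cruder bound $\log(2t)\le t$ and ends with the explicit choice $\varkappa=e^{-7\bar a-\epsilon}$; your resulting condition $\log(4\varkappa\bar a)+4\varkappa\bar a\le-(\bar a+\epsilon)$ is in fact slightly less restrictive on $\varkappa$.
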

\begin{proof}
By (\ref{H}) we have
\begin{gather}
  \label{H1}
  e^{2\delta (e)} -1 \leq 2\delta(e) e^{2\delta (e)}
   \leq \exp \bigg{(} \log \varkappa + (2 \varkappa+1) [\bar{a} + g( n_{\sf L}(e))]  \bigg{)} \\[.2cm] \nonumber \leq
   \exp \bigg{(} \log \varkappa + 3 [\bar{a} + g( n_{\sf L}(e))]  \bigg{)}.
\end{gather}
We use this and (\ref{H1}) in (\ref{Gb2a}) and arrive at
\[
\sup_{e\in {\sf E}} \sup_{k\in \mathds{N}}\max_{\vartheta\in
\varTheta_{N_k}(e)}\mathfrak{D} (\vartheta) \leq \log \varkappa + 6
\bar{a},
\]
which means that (\ref{Gb2b}) holds for $\varkappa = e^{- 7 \bar{a}
- \epsilon}$, which yields the proof.
\end{proof}

To clarify the interconnections between the properties of ${\sf H}$,
${\sf L}({\sf H})$ and $(h_e)_{e\in {\sf E}}$, let us impose some
further restrictions. In particular, we assume that
\begin{equation}
  \label{Gb3}
  \bigcap_{e\in {\sf E}_x} e = \{x\}.
\end{equation}
This separability assumption implies the following.
\begin{itemize}
  \item[(a)] Each $e$ can contain at most one $x$ such that $n_{\sf
  H}(x) =1$, i.e., such that is contained only in this $e$.
  \item[(b)] If distinct $x$ and $y$ are contained in a given $e$,
  and if they are contained in a certain $e'\sim e$, then there
  exist another $e_1, e_2 \in {\sf E}$ such that $x\in e_1$, $y\in
  e_2$, and also $x\in e_2^c$ and $y\in e_1^c$.
\end{itemize}
These observations further yield
\begin{equation}
  \label{Gb4}
  |e|- 1 \leq n_{\sf L}(e) \leq \sum_{x\in e} (n_{\sf H}(x) -1) \leq
  |e| \max_{x\in e} (n_{\sf H}(x) -1).
\end{equation}
The proof of (a) is obvious. To prove (b) one observes that the
assumption that $y$ is out of any $e_1$ which contains $x$
contradicts (\ref{Gb3}). The lower bound for $n_{\sf L}(e)$ in
(\ref{Gb4}) follows first by (a) -- as each but possibly one of
$y\in e$ should have $e' \in \mathcal{E}_e$, see (\ref{Gb2}) -- and
then by (b), which yields that each such $y$ should have its own
$e'$. The upper bound follows by (b), where subtracting 1 is to take
into account that $x$ belongs to $e$. Clearly (\ref{Gb3}) is
essential for the properties of $\sf H$. However, from the point of
view of the application to the random fields considered here it is
not too restrictive for the following reason. First of all, one has
to mention that the construction can be modified to include the
possibility to take the single-state (spin) space $S$ dependent on
$x$, i.e., to take $\prod_{x\in {\sf V}} S_x$ instead of $S^{\sf
V}$. Then one repeats the whole construction after imposing suitable
conditions on $S_x$, uniform in $x$. Now if (\ref{Gb3}) fails to
hold, one introduces the equivalence relation and passes to
equivalence classes (each is finite) consisting of those $x, y,...$
that belong to the intersections. Then the factor-hypergraph
obtained thereby enjoys the separability in question. Thereafter,
one sets strings $\sigma_{[x]} = (\sigma_x, \sigma_y ,\dots)$ as new
spins lying in the corresponding product spaces and described by the
corresponding products of $\chi$. Similar arguments we used in
\cite{BD} where $x$ and $y$ were declared similar if $\varphi_{x,y}$
is big, which allowed for including such terms in the new reference
measure in place of $\chi^{\sf V}$.

The bounds obtained in (\ref{Gb4}) show that $n_{\sf L}(e)$ can be
big (and hence $e$ can be a hub) because: (a) $|e|$ is big; (b) many
of $x\in e$ have many neighbors in $\sf H$. The bounds as in
(\ref{Gb4}) can be used as follows. Assume that, cf. (\ref{Gb}),
\[
\sup_{x\in {\sf V}} n_{\sf H}(x) =: \bar{n}_{\sf H} < \infty.
\]
Then $n_{\sf L}(e)$ satisfies the following two-sided estimate
\begin{equation*}
%  \label{Gb5}
  |e|- 1 \leq n_{\sf L}(e) \leq |e| (\bar{n}_{\sf H} -1),
\end{equation*}
and hence is controlled by $|e|$. This may allow one to use $|e|$ in
definitions like (\ref{Ga2}) and (\ref{Ga6}) instead of $n_{\sf
L}(e)$. Also, if $d_{\sf L}(e,e')$ satisfies
\begin{equation*}
  %\label{Gb4a}
  d_{\sf L}(e,e') \geq \phi(\min\{|e|; |e'|\}),
\end{equation*}
 for $\phi$ and $g=g_*$ satisfying (\ref{Ga7}), see Remark
 \ref{G1rk}, then ${\sf L}({\sf H})$ satisfies the conditions of
 Theorem \ref{1tm}. This might yield a more direct and thus convenient way of
 controlling the graph $\sf H$ than that based on the degrees
 $n_{\sf L}$.

\section{Examples}

\label{S}

To illustrate the abilities of the uniqueness condition formulated
in Theorem \ref{1tm}, we provide two examples. We begin by
introducing a class of models where the underlying line-graphs are
tempered.

\subsection{The overlapping cliques model}
Let $\{n_l\}_{l\in \mathds{N}} \subset \mathds{N}$ be given. With
the help of this sequence, we furnish $\sf V$ with the following
structure. One starts with a subset  $e_1 \subset {\sf V}$
containing $n_1=|e_1|$ elements. Then one takes subsets $e_{2,1},
\dots, e_{2,n_1}$ containing $n_2$ elements each, in such a way
that: (a) $e_{2,i} \cap e_{2,j} = \varnothing$, $i\neq j$; (b) $e_1
\cap e_{2,i}$ is a singleton, the unique element of which is denoted
$x_i$. In other words, each $x_i \in e_1$ is an element of the
corresponding $e_{2,i}$. Next, one takes $e_{3,i,k}$, $i=1, \dots
n_1$, $k=1, \dots ,n_2-1$. All $e_{3,i,k}$ are pairwise disjoint,
each
 containing $n_3$ elements and $e_{3,i,k} \cap e_{2,i} =
 \{x_{i,k}\}$. That is, each $x_{i,k} \in e_{2,i}$ is an element of
$e_{3,i,k}$. This procedure is then continued ad infinitum
 yielding $${\sf E}:=\{e_{m, i_1, \dots , i_{m-1}}: m\in \mathds{N}, i_1\leq n_1, i_2\leq n_2-1, \dots i_{m-1} \leq n_{m-1}
 -1\},$$ which exhausts $\sf V$. Thereby, one gets the hypergraph ${\sf H}= ({\sf V}, {\sf E})$. Its linear graph ${\sf L}({\sf H})$ is a rooted
 tree of which $\{n_l\}$ is the degree sequence.
If one sets an appropriate graph structure on each $e_{m, i_1, \dots
,
 i_{m-1}}$, then the resulting graph becomes a Husimi tree, see
 \cite[page 259]{Monroe}, \cite[page 220]{Monroe1}, or \cite{Essam}. In particular, one can
 turn each $e_{m, i_1, \dots ,
 i_{m-1}}$ into a clique - a complete graph $K_{n_m}$, see
 \cite{Essam}.
Now we define, cf.
 (\ref{G7}),
 \begin{equation}
 \label{Aa39}
h_e(\sigma) = \exp \left(K\varphi_e (\sigma_e) \right), \qquad 0<
a_e \leq \varphi_e(\sigma_e)\leq b_e, \quad K>0,
 \end{equation}
which yields
\begin{equation}
  \label{Aa1}
  \delta (e) =  K (b_e -a_e) =: K c_e.
\end{equation}
Recall that each $e_{m,i_1, \dots , i_{m-1}}$ with the same $m$
contains the same number $n_m$ of elements. Assume  that they are
isomorphic if furnished with a graph structure. This, in particular,
means that all $\varphi_e$, and hence $c_e$, are the same for such
$e$. In this case, we also write $e_m$ meaning one of these sets. An
example can be the model as in \cite{Monroe}, where each $n_l=4$ and
all $e$ are $C_4$ with the corresponding $\varphi_e$, see also
\cite[page 220]{Monroe1}, where $n=3$. By construction, each ${\sf
E}_x$, see (\ref{G8a}), contains two elements: $e_m$ and $e_{m+1}$
for $x$-dependent $m$. For instance, each $x\in e_1$ is contained
also in some $e_2$. Each but one element of $e_{2,i}$ is contained
also in $e_{3,i,k}$ for some $k$. In this case, the Dobrushin
uniqueness condition (\ref{G8}) takes the form
\begin{equation}
  \label{Aa2}
K  \sup_{m\geq 1} \left[ c_{e_m} (n_m-1) + c_{e_{m+1}}
  (n_{m+1}-1)\right] < 2.
\end{equation}
Thus, to verify (\ref{Aa2}) both sequences $\{c_{e_m}\}$ and
$\{n_m\}$ ought to be bounded. If they all are constant, i.e.,
$c_{e_m}=c$ and $n_m=n$,  as it is in \cite{Monroe,Monroe1}, then
(\ref{Aa2}) turns into
\begin{equation}
  \label{Aa23}
 K < 1/ c (n -1).
\end{equation}
In this case, the line graph ${\sf L}({\sf H})$ is an $n$-tree and
we have that $n_{\sf L}=n$; hence, $\bar{a} = \log n$, see
Definition \ref{Ga1df}. And also $\mathfrak{D}(\vartheta)
=\mathfrak{D}(e) = \log(e^{2Kc} -1)$, which means that the condition
in (\ref{Gb2b}) takes the form
\begin{equation*}
%  \label{Aa24}
  K < \frac{1}{2c} \log \frac{n+1}{n},
\end{equation*}
that is comparable with (\ref{Aa23}) and has a similar asymptotic as
$n\to +\infty$. For unbounded sequences $\{c_{e_m}\}$ and $\{n_m\}$
-- where (\ref{G8}), hence (\ref{Aa2}), does not work -- we have the
following result. Assume that there is given an increasing sequence
$\{l_s\}\subset \mathds{N}$, which we use to impose the following
conditions on the sequence $\{n_l\}$: $n_{1+s}\leq n_1$, for $1\leq
s\leq l_1-1$; $n_{1+l_1+ s} \leq n_{1+l_1}$ for $1\leq s\leq l_2-1$,
an so on. In other words, the increase of $n_m$ is allowed only when
$m$ takes values $m_s$, $s\geq 1$; where $m_1=1$ and $m_{s}:=1 + l_1
+\cdots + l_{s-1}$ for $s\geq 2$. Moreover, the elements of these
sequences are supposed to satisfy
\begin{equation}
  \label{Aa25}
l_s \geq \phi(n_{m_s}) := \left[ \log n_{m_s} \right]^2.
\end{equation}
That is, the tree -- the line-graph of the hypergraph defined above,
satisfies (\ref{Ga6}) with this $\phi$. Now we take $t_k = \exp(
a^k)$, $k\in \mathds{N}$ for some $a>1$. For this sequence and $g(t)
= \log t$, we then have
\begin{gather}
  \label{Aa26}
  \bar{b}= \sum_{k=1}^\infty \frac{g(t_{k+})}{\phi(t_k)} =
  a\sum_{k=1}^\infty a^{-k} = \frac{a}{a-1}.
\end{gather}
By Proposition \ref{Ga2pn} it follows that the tree ${\sf L}({\sf
H})$ is $(g,\bar{a})$-tempered with such $g$ and $\bar{a} =
2a/(a-1)$. By Proposition \ref{Gb1pn} we then conclude that the set
of the corresponding Markov fields is a singleton if
\begin{equation}
  \label{Aa27}
  K c_{e_m} < e^{-14}( 2+ \log n_m),
\end{equation}
by which $c_{e_m}$ can also be unbounded. Assume now that
$\varphi_e(\sigma_e)$ has the following form
\begin{equation}
  \label{Aa28}
  \varphi_e(\sigma_e) = \frac{1}{|e|} \sum_{\{i,j\} \subset e}
  \sigma_i \sigma_j , \qquad \sigma_i =\pm 1,
\end{equation}
which means that each $e$ is turned into a clique, and the
interaction within each clique is of Curie-Weiss type. That is, we
are dealing with an Ising spin model on a Husimi tree of this kind.
Simple calculations mean that
\begin{equation}
  \label{Aa29}
  c_e = \frac{|e|-1}{2} +\frac{1}{|e|} \left[ \frac{|e|}{2}\right]
  \leq \frac{|e|}{2},
\end{equation}
where $[\alpha]$ stands for the integer part of $\alpha>0$. That is,
$c_e$ as in (\ref{Aa29}) fails to satisfy (\ref{Aa27}) for big $|e|$
if the growth of the sequence $\{n_l\}$ is governed by (\ref{Aa25}).
In view of this, we impose a more restrictive condition on the
growth of this sequence. Namely, instead of (\ref{Aa25}) we set
\begin{equation*}
%  \label{Aa30}
  l_s \geq \phi(n_{m_s}) = [n_{m_s}]^2.
\end{equation*}
In this case, for $t_k = a^{k}$, $a>1$, we have that (\ref{Aa26})
holds true for $g(t)=t$ and $\phi (t) = t^2$. Thus, ${\sf L}({\sf
H})$ is $(g,\bar{a})$-tempered with this $g$ and $\bar{a}=2a/(a-1)$.
Then the conditions of Theorem \ref{1tm} are satisfied if, cf.
(\ref{Aa27}) and (\ref{Aa29}),
\begin{equation*}
 % \label{Aa32}
K  < 2 e^{-14} ,
\end{equation*}
where we have taken into account that $n_m\geq 2$.

\subsection{Random interactions}

In this subsection, we obtain a generalization to higher-order
dependencies (many-body interactions) of our previous result in
\cite{KK2}, which in turn is a refinement and a generalization of
the classical Bassalygo-Dobrushin work \cite{BD}. Assume that the
functions $h_e$ -- which determine the specification $\gamma$, and
hence the set $\mathcal{G}\gamma)$ -- are random. Namely, each $h_e$
is as in (\ref{Aa39}) with a random function $\varphi_e$, for which
the bounds $a_e$ and $b_e$ are also random. In particular,
$\varphi_e$ may be as in (\ref{Aa28}) times a random numerical
factor $q_e$. In this case, the set $\mathcal{G}(\gamma)$ also gets
random.
\begin{theorem}
  \label{2tm}
Let the line-graph ${\sf L}({\sf H})$ be as in Theorem \ref{1tm} and
$h_e$ be as in (\ref{Aa39}) with independent and identically
distributed $\varphi_e$ such that the bounds $c_e$, see (\ref{Aa1}),
satisfy
\begin{equation}
\label{c-e} \forall K>0 \ \forall e \qquad \mathds{E} e^{K c_e} <
\infty.
\end{equation}
Then there exists $K_*>0$ such that the set $\mathcal{G}(\gamma)$ is
almost surely a singleton whenever $K<K_*$. The unique $\mu\in
\mathcal{G}(\gamma)$ is almost surely globally Markov.
\end{theorem}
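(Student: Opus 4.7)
The approach is to apply Theorem \ref{1tm} almost surely by constructing, on a full-probability event, a valid temperedness sequence for which the uniqueness condition (\ref{Gb2b}) holds. Fix any $\epsilon > 0$. Since $\mathds{E} e^{K c_e} < \infty$ for every $K>0$ by (\ref{c-e}), dominated convergence gives $M(K) := \mathds{E}(e^{2K c_e} - 1) \to 0$ as $K \to 0^+$. Choose $K_*>0$ so small that $\rho(K) := e^{2\bar a + \epsilon} M(K) < 1$ for all $K < K_*$, and fix such a $K$.

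First I would establish a per-path tail estimate. For any simple path $\vartheta$ in ${\sf L}({\sf H})$ with $|{\sf A}_\vartheta| = N$, the variables $\bigl(\log(e^{2Kc_{e'}}-1)\bigr)_{e'\in{\sf A}_\vartheta}$ are i.i.d., so Chernoff's inequality at exponent $\lambda=1$ yields
\[
\mathds{P}\bigl(\mathfrak{D}(\vartheta) > -(\bar a + \epsilon)\bigr) \leq e^{(\bar a + \epsilon) N} M(K)^N.
\]
Combined with the path-counting bound $|\varTheta^N_{N_k(e)}(e)| \leq e^{\bar a N}$ of Proposition \ref{Ga1pn}, the probability that some $\vartheta \in \varTheta^N_{N_k(e)}(e)$ violates the bound is at most $\rho(K)^N$. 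Summing geometrically over $N \geq N_k(e)+1$ and $k \in \mathds{N}$ (using $N_k(e) \geq k$), the expected number of violating paths starting at any fixed $e \in {\sf E}$ is finite.

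By the first Borel--Cantelli lemma, for each $e \in {\sf E}$ only finitely many offending paths start at $e$ almost surely; countability of ${\sf E}$ upgrades this to a single event $\Omega^*$ of full probability on which it holds for every $e$ simultaneously. On $\Omega^*$, let $N^*(e) < \infty$ denote the largest size $|{\sf A}_\vartheta|$ among these violating paths (with $N^*(e) = 0$ if none), set $k^*(e) := \min\{k : N_k(e) > N^*(e)\}$, and define $\tilde N_k(e) := N_{k + k^*(e) - 1}(e)$. This subsequence is strictly increasing; ${\sf L}({\sf H})$ remains $(g,\bar a)$-tempered with respect to $\{\tilde N_k(e)\}$ (the supremum taken over a subset of indices is no larger); and by construction no $\vartheta \in \varTheta_{\tilde N_k(e)}(e)$ violates $\mathfrak{D}(\vartheta) \leq -(\bar a + \epsilon)$ for any $k$ and $e$. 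Hence (\ref{Gb2b}) is satisfied almost surely for $\{\tilde N_k(e)\}$, and applying Theorem \ref{1tm} pointwise on $\Omega^*$ yields both the almost-sure singleton property of $\mathcal{G}(\gamma)$ and the global Markov property of its unique element.

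The main obstacle to a direct verification is the unboundedness of $c_e$: with positive probability some $c_e$ is arbitrarily large, so short paths starting at any given $e$ can violate the bound with positive probability, and summing such probabilities across the infinitely many hyperedges of ${\sf E}$ rules out an unmodified union-bound argument at the level of the whole hypergraph. The resolution exploits the flexibility inherent in the temperedness framework, which only requires the existence of \emph{some} valid sequence $\{N_k\}$: discarding the almost-surely finitely many offending short paths via a random subsequence preserves both the temperedness and the cardinality estimate of Proposition \ref{Ga1pn}, which is exactly what is needed to invoke Theorem \ref{1tm}.
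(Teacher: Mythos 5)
Your argument is correct, but it takes a genuinely different route from the paper's. The paper does not attempt to verify the hypothesis (\ref{Gb2b}) almost surely; instead it returns to the key estimate (\ref{Gc16}) from the proof of Lemma \ref{1lm}, bounds the random oscillation $X_k^K=|M_{x,N_k}(A)|$ by the path sum $\sum_{\vartheta\in\varTheta_k}\prod_{e\in{\sf A}_\vartheta}\left(e^{2Kc_e}-1\right)$, and takes expectations: since the vertices of an animal are distinct and the $c_e$ are i.i.d., $\mathds{E}\prod_{e\in{\sf A}_\vartheta}\left(e^{2Kc_e}-1\right)=\tau(K)^{|{\sf A}_\vartheta|}$ with $\tau(K)=\mathds{E}\left(e^{2Kc_e}-1\right)$, so $\mathds{E}X_k^K\to 0$ once $e^{\bar a}\tau(K)<1$; an almost surely vanishing subsequence is then extracted and Proposition \ref{1pn} is invoked. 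Both proofs therefore rest on the same first-moment computation over animals combined with the path count of Proposition \ref{Ga1pn}, but you deploy it in a ``quenched'' way: a Chernoff/union bound plus Borel--Cantelli shows that on a full-measure event only finitely many paths issuing from each $e$ violate $\mathfrak{D}(\vartheta)\leq-(\bar a+\epsilon)$, and you absorb these by passing to a realization-dependent subsequence of the temperedness scales $\{N_k(e)\}$ --- a legitimate move, since Definition \ref{Ga1df} only requires the existence of some such sequence and passing to a subsequence cannot increase the supremum there, while the size of any path in $\varTheta_{\tilde N_k(e)}(e)$ exceeds $N^*(e)$ by construction. What your route buys is a cleaner logical structure: Theorem \ref{1tm} is applied as a black box on the single event $\Omega^*$, so the uniformity over $x$ and $A$ that the paper needs for Proposition \ref{1pn} (where the exceptional null set a priori depends on $x$ and $A$) comes for free. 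What it costs is a quantitatively smaller threshold $K_*$, determined by $e^{2\bar a+\epsilon}\tau(K)<1$ rather than the paper's sharper condition $e^{\bar a}\tau(K)<1$; since the theorem only asserts the existence of some $K_*>0$, this is immaterial.
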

The proof of this theorem is given after the proof of Theorem
\ref{1tm} performed in the next section. Here we provide some
relative comments. Markov random fields with random interactions
naturally appear in statistical physics as states of thermodynamic
equilibrium (Gibbs states) of disordered physical systems, e.g.,
desordered magnets. A systematic presentation of the mathematical
theory of disordered systems can be found in \cite{Ant}. According
to the classification adopted herein, in Theorem \ref{2tm} we deal
with quenched Gibbs states. First works dealing with such objects
appeared in the 1980'th \cite{BD,Frol}. Then they were continued in
\cite{Berg,DKP,GM}, see also \cite{KK2} and the articles quoted in
this work. In the seminal work by Bassalygo and Dobrushin \cite{BD},
the question of uniqueness of corresponding Gibbs states on regular
lattices was studied. As a result, a uniqueness condition was
obtained by a technique based on \emph{gluing out} vertices of the
original lattice with subsequent passing to coarse-grained graphs of
a special structure, in which vertices of high degree (hubs) were
sparse. Despite the title of that paper, the technique used there is
too complicated and sometimes unclear. In \cite{KK2} we obtained a
similar result valid also for graphs with unbounded vertex degrees,
in which \emph{unboundedness} is controlled in a certain way, i.e.,
is tempered. Up to the best of our knowledge, this is the first
result of this kind obtained for Gibbs fields with binary random
interactions on unbounded degree graphs. In Theorem \ref{2tm} we
extend this result to the case of higher-order interactions.
Additionally, we obtain here the global Markov property for the
unique $\mu\in \mathcal{G}(\gamma)$. A further extension can be
dropping the condition of the identical distribution of $c_e$, and
then imposing `averaged weakness' conditions like the one in
(\ref{Gb2b}). We plan to turn to this issue in a separate work.

\section{The Proofs}

\label{SS}

 In view of Remark \ref{G1rk}, it is enough to prove
Theorem \ref{1tm} for $g_*$. Hence, below we set $g(n)=g_*(n) = \log
n$.
\subsection{Preparatory part}
We begin by making more precise our notations. By $\sf C,\sf
D\subset {\sf E}$, etc, we denote nonempty finite sets of vertices
of the line-graph ${\sf L}({\sf H})$, and also its subgraphs
generated by these sets of vertices. By ${\sf A}\subset {\sf E}$ we
always mean such a subset, for which the generated graph is
connected. Define
\begin{equation*}
%  \label{Gc0}
{\sf B}_r(e)=\{e'\in {\sf E}: d_{\sf L}(e,e') \leq r\}, \qquad {\sf
S}_r(e)= \{e'\in {\sf E}: d_{\sf L}(e,e') = r\}.
\end{equation*}
Next, for a subset, $\sf C\subset {\sf E}$, we set
\begin{equation*}
 % \label{Gc}
  \langle {\sf C}\rangle = \{ x\in {\sf V}: \exists e \in {\sf C} \
  x\in e\},
\end{equation*}
that is, $ \langle {\sf C}\rangle$ is the collection of all $x\in
{\sf V}$ contained in all $e\in {\sf C}$.

For $r>0$ and a  given $x\in {\sf V}$, we fix some $e_x \in {\sf
E}_x$, and then set
\begin{gather}
  \label{Gc1}
\Lambda_{x,r} =  \langle {\sf B}_{r}(e_x) \rangle.
\end{gather}
For each $e\in {\sf S}_{r+1} (e_x)$, it follows that $e\cap
\Lambda_{x,r} \neq \varnothing$ as this $e$ has neighbors in ${\sf
B}_r(e_x)$. At the same time, if $e\in {\sf S}_q(e_x)$ with $q>
r+1$, then $e\cap \Delta_{x,r} =\varnothing$, which means that, see
(\ref{Ga8a}),
\begin{equation}
  \label{Gc3}
\partial \Lambda_{x,r} =\langle {\sf S}_{r+1}(e_x)\rangle\setminus  \Lambda_{x,r} = \bigcup_{e\in {\sf S}_{r+1}(e_x)} \left( e \cap
 \Lambda^c_{x,r} \right)  .
\end{equation}
Following \cite{F} we prove Theorem \ref{1tm} by establishing so
called \emph{strong uniqueness}, which implies both the uniqueness
in question and the global Markov property. To this end, we fix some
nonempty $\Delta \subset {\sf V}$, and also $\omega\in \Sigma$. Then
set
\begin{equation}
  \label{A}
  \mathcal{V}_\Delta = \{ \Lambda \in \mathcal{V}: \Lambda \subset
  \Delta\}.
\end{equation}
For $\sigma \in \Sigma$, by $\sigma_\Delta \times \omega_{\Delta^c}$
we mean the element of $\Sigma$ such that $(\sigma_\Delta \times
\omega_{\Delta^c})_x =\sigma_x$ for $x\in \Delta$, and
$(\sigma_\Delta \times \omega_{\Delta^c})_x =\omega_x$ for $x\in
\Delta^c$. For such fixed $\Delta$ and $\omega$, we then define
\begin{equation}
  \label{A1}
  \gamma^{\Delta,\omega}_\Lambda (A|\sigma_\Delta) = \gamma_\Lambda
  (A|\sigma_\Delta \times \omega_{\Delta^c}), \qquad A\in
  \mathcal{F}_\Delta , \quad \Lambda \in \mathcal{V}_\Delta,
\end{equation}
which is a probability kernel from $ \mathcal{S}^{\Delta \setminus
\Lambda}$ to $\mathcal{F}_\Delta$. Then
$\gamma^{\Delta,\omega}=(\gamma^{\Delta,\omega}_\Lambda)_{\Lambda
\in \mathcal{V}_\Delta}$ is a specification, which coincides with
$\gamma$ for $\Delta ={\sf V}$ and any $\omega$. By
$\mathcal{G}(\gamma^{\Delta,\omega})$ we denote the set of all
probability measures on $\mathcal{F}_\Delta$ which satisfy, cf.
(\ref{G1a}),
\begin{equation}
  \label{A2}
  \mu \gamma^{\Delta,\omega}_\Lambda = \mu, \qquad \Lambda \in
  \mathcal{V}_\Delta.
\end{equation}
\begin{lemma}[strong uniqueness]
  \label{1lm}
Let $\Delta$, $\omega$ and $\gamma^{\Delta,\omega}$ be as just
described. Then $\mathcal{G}(\gamma^{\Delta,\omega})$ is a singleton
whenever (\ref{Gb2b}) is satisfied.
\end{lemma}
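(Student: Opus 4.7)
The plan is to prove strong uniqueness by controlling the dependence of the kernel $\gamma^{\Delta,\omega}_{\Lambda}(\cdot|\sigma)$ on the boundary configuration $\sigma$. Given two measures $\mu_1,\mu_2\in\mathcal{G}(\gamma^{\Delta,\omega})$ and a bounded $\mathcal{F}_{\{x\}}$-measurable $f$ for some $x\in\Delta$, the DLR identity (\ref{A2}) yields $\mu_i(f)=\int\gamma^{\Delta,\omega}_{\Lambda}(f|\sigma)\,\mu_i(d\sigma)$ for any $\Lambda\in\mathcal{V}_\Delta$ containing $x$. Taking $\Lambda=\Lambda_{x,N_k}\cap\Delta$ with $\{N_k\}$ the sequence from Definition~\ref{Ga1df} applied to ${\sf L}({\sf H})$, it suffices to show that
\[
\Omega_k := \sup_{\sigma,\sigma'\in\Sigma}\bigl|\gamma^{\Delta,\omega}_{\Lambda}(f|\sigma) - \gamma^{\Delta,\omega}_{\Lambda}(f|\sigma')\bigr| \longrightarrow 0 \qquad \text{as } k\to\infty,
\]
which will force $\mu_1(f)=\mu_2(f)$ and hence, by a monotone-class argument, $\mu_1=\mu_2$.

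A first ingredient is a single-hyperedge influence bound. Since $h_{\Lambda}=\prod_{e\in{\sf E}_{\Lambda}}h_e$ by (\ref{G6}) and only the factors $h_e$ with $y\in e\cap\partial{\sf E}_{\Lambda}$ depend on $\sigma_y$, flipping $\sigma_y$ modifies the density of $\gamma^{\Delta,\omega}_{\Lambda}(\cdot|\sigma)$ by a log-ratio bounded by $\delta(e)$ on each such $e$, in view of Assumption~\ref{1ass} and (\ref{G8a}). A standard total-variation computation then produces, per hyperedge, an influence coefficient of size $e^{2\delta(e)}-1$ on any $\mathcal{F}_{\Lambda}$-measurable observable.

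The second step is iteration. By a recursive decomposition of $\gamma^{\Delta,\omega}_{\Lambda}$ into successive single-hyperedge updates -- or equivalently via a disagreement-percolation coupling of $\gamma^{\Delta,\omega}_{\Lambda}(\cdot|\sigma)$ and $\gamma^{\Delta,\omega}_{\Lambda}(\cdot|\sigma')$ differing in one boundary spin -- the influence of that spin on $f$ is transported to $e_x$ through a chain of hyperedges adjacent in ${\sf L}({\sf H})$. Each traversed $e'$ contributes a factor $(e^{2\delta(e')}-1)$, so $\Omega_k$ is controlled by a sum over simple paths $\vartheta\in\varTheta^{N}_{N_k}(e_x)$, $N\geq N_k+1$, of $\prod_{e'\in{\sf A}_\vartheta}(e^{2\delta(e')}-1)$, times $2\|f\|_\infty$. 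By (\ref{Gb2b}) each product is at most $e^{-(\bar{a}+\epsilon)N}$, and by Proposition~\ref{Ga1pn} the number of such paths of length $N-1$ is at most $e^{\bar{a}N}$, so
\[
\Omega_k \;\leq\; 2\|f\|_\infty \sum_{N\geq N_k+1} e^{-\epsilon N} \;\longrightarrow\; 0,
\]
which concludes the uniqueness statement. Since $\Delta\subset{\sf V}$ is arbitrary, the global Markov property of the unique element of $\mathcal{G}(\gamma)$ follows from strong uniqueness by the standard argument of \cite{F, AHKO}.

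The main obstacle will be making the propagation step in the previous paragraph rigorous at the level of hyperedges rather than vertices. In the two-body case (\ref{G5}) the classical Dobrushin chain used in \cite{F} handles this directly; here a single hyperedge can couple several spins simultaneously, so the recursion must proceed along ${\sf L}({\sf H})$ rather than along $\sf H$. The cleanest route I foresee is to build a bond-percolation coupling on ${\sf L}({\sf H})$ in which each $e'\in{\sf E}$ is independently declared open with probability $(e^{2\delta(e')}-1)\wedge 1$, stochastically dominating the disagreement cluster of $e_x$; establishing this domination with the sharp constants $e^{2\delta(e')}-1$, and not a larger quantity, is the delicate technical point on which the whole estimate hinges.
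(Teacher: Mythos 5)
Your overall skeleton matches the paper's: reduce to single-site observables $F^A_x$ (the paper uses Proposition \ref{1pn}, i.e.\ \cite[Theorem 1.33]{Ge}, where you invoke a monotone-class argument), take $\Lambda_k=\Lambda_{x,N_k}\cap\Delta$ along the sequence from Definition \ref{Ga1df}, and aim at a bound of the form $\sum_{N\geq N_k}|\varTheta^N_{N_k}(e_x)|\,e^{-(\bar a+\epsilon)N}\leq C e^{-\epsilon N_k}$ via (\ref{Gb2b}) and Proposition \ref{Ga1pn}. But the decisive step --- actually producing the sum over simple paths in ${\sf L}({\sf H})$ with the weights $e^{2\delta(e)}-1$ --- is not proved; you yourself flag it as ``the delicate technical point on which the whole estimate hinges'' and offer two candidate routes (a Dobrushin-type single-hyperedge recursion, or a disagreement-percolation coupling on ${\sf L}({\sf H})$ with open probabilities $(e^{2\delta(e)}-1)\wedge 1$). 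Neither is carried out, and neither is what the paper does. The percolation route in particular is not a routine adaptation: van den Berg--Maes type domination is formulated for finite spin spaces and site disagreement on the original graph, and obtaining the specific per-edge constant $e^{2\delta(e)}-1$ (rather than something larger) from such a coupling over a standard Borel single-spin space is precisely the unresolved content.

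The paper's actual mechanism is a two-replica (duplicated-variables) expansion, which sidesteps any coupling. One writes $\gamma^{\Delta,\omega}_{\Lambda_k}(F^A_x|\sigma)-\gamma^{\Delta,\omega}_{\Lambda_k}(F^A_x|\sigma')$ as a double integral over $(\xi,\eta)$ with the antisymmetric factor $\mathds{1}_A(\xi_x)-\mathds{1}_A(\eta_x)$, sets $\Gamma_e(\xi,\eta)=\bar h_e(\xi)\bar h_e(\eta)-1\geq 0$ with $\bar h_e=h_e/m_e$ (this is where the exponent $2$ in $e^{2\delta(e)}-1$ comes from: two replicas), and expands $\prod_e(1+\Gamma_e)=\sum_{\sf C}\Gamma_{\sf C}$. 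Antisymmetry in $\xi\leftrightarrow\eta$ kills every ${\sf C}$ whose connected component containing $e_x$ fails to reach the boundary layer ${\sf D}^{-}_{2,k}$; each surviving ${\sf C}$ contains a simple path $\vartheta\in\varTheta^N_{N_k}(e_x)$, $N\geq N_k$, and the factorization (\ref{Gc15})--(\ref{Gc20}) lets the leftover sum over ${\sf C}\setminus{\sf A}_\vartheta$ be absorbed into the normalizations $Z^{\Delta,\omega}_{\Lambda_k}(\sigma)Z^{\Delta,\omega}_{\Lambda_k}(\sigma')$. This is an algebraic identity plus a combinatorial observation, requiring no stochastic domination lemma. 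Without this (or a worked-out substitute), your argument establishes the reduction and the final summation but not the inequality that connects them, so as it stands the proof is incomplete at its central point.
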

The proof of this lemma is based on a certain property of the
functions $h_e$,  which  we formulate now. Let $\mathds{1}_A$ stand
for the indicator of $A\in \mathcal{S}$. For $x\in {\sf V}$ and
$A\in \mathcal{S}$, we then set $F^A_x(\sigma) =
\mathds{1}_A(\sigma_s)$. Since the functions $h_e$ are separated
away from zero, see (\ref{G7}), the following statement is a direct
consequence of \cite[Theorem 1.33, page 23]{Ge}.
\begin{proposition}
  \label{1pn}
Let $\Delta$, $\omega$ and $\gamma^{\Delta,\omega}$ be as in Lemma
\ref{1lm}. Then for each $\mu_1, \mu_2 \in
\mathcal{G}(\gamma^{\Delta,\omega})$, the equality $\mu_1(F^A_x) =
\mu_2(F^A_x)$, holding for all $x\in \Delta $ and $A\in
\mathcal{S}$, implies $\mu_1 = \mu_2$.
\end{proposition}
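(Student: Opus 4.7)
The plan is to derive Proposition \ref{1pn} as a direct application of Theorem 1.33 in \cite{Ge}, which guarantees that for a specification built from a strictly positive pre-modification, any two Gibbs measures coinciding on all single-site integrals $\mu(F^A_x)$ must coincide globally. The argument consists of two short steps.

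First, I would verify strict positivity of the pre-modification underlying $\gamma^{\Delta,\omega}$. By Assumption \ref{1ass}, each $h_e$ satisfies $0 < m_e \leq h_e \leq M_e$, and for any $\Lambda \in \mathcal{V}$ the index set ${\sf E}_\Lambda$ is finite since $\sf H$ is locally finite and $\Lambda$ is finite. Consequently the product $h_\Lambda = \prod_{e\in {\sf E}_\Lambda} h_e$ in (\ref{G6}) is sandwiched between two positive constants depending only on $\Lambda$, so the density $\rho_\Lambda = h_\Lambda/\chi_\Lambda(h_\Lambda)$ is strictly positive and uniformly bounded on $\Sigma$. This is precisely the nonnullness hypothesis required by the cited theorem, and it is plainly preserved under the substitution $\sigma_{\Delta^c}\mapsto \omega_{\Delta^c}$ that defines $\gamma^{\Delta,\omega}$ in (\ref{A1}).

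Second, I would invoke the theorem itself: given $\mu_1,\mu_2 \in \mathcal{G}(\gamma^{\Delta,\omega})$ with $\mu_1(F^A_x) = \mu_2(F^A_x)$ for every $x\in\Delta$ and $A\in\mathcal{S}$, Theorem 1.33 in \cite{Ge} directly yields $\mu_1=\mu_2$.

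The only point that needs a brief remark, which I view as the main (mild) obstacle, is that Theorem 1.33 in \cite{Ge} is formulated for a specification indexed by the full family $\mathcal{V}$, whereas $\gamma^{\Delta,\omega}$ is indexed by the restricted family $\mathcal{V}_\Delta$ (see (\ref{A})) with frozen boundary $\omega_{\Delta^c}$. This is resolved by regarding $\gamma^{\Delta,\omega}$ as the specification of a Markov field on the measurable space $(S^\Delta, \mathcal{S}^\Delta)$: consistency is inherited from $\gamma$, positivity was established in the previous step, and the proof of Georgii's theorem relies only on these two features, so it transfers verbatim to the present setting.
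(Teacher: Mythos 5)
Your proposal is correct and is essentially the paper's own argument: the authors likewise observe that the $h_e$ (hence the $h_\Lambda$, being finite products) are separated away from zero and then cite \cite[Theorem 1.33, page 23]{Ge} directly, treating $\gamma^{\Delta,\omega}$ as a specification on $(S^\Delta,\mathcal{S}^\Delta)$. Your additional remarks on local finiteness and on transferring Georgii's theorem to the restricted family $\mathcal{V}_\Delta$ with frozen boundary $\omega_{\Delta^c}$ merely make explicit what the paper leaves implicit.
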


\subsection{The proof of the lemma}

Clearly, the lemma ought to be proved only for infinite
$\Delta\subset {\sf V}$. Let $\{\Lambda_k\}_{k\in \mathds{N}}\subset
\mathcal{V}_\Delta$, see (\ref{A}), be an ascending sequence that
exhausts $\Delta$. By (\ref{A2}), for each $x\in \Delta$ and $k$
such that $x\in \Lambda_k$, we have
\begin{gather*}
  %\label{A7}
  \mu_1 (F^A_x) - \mu_2(F^A_x) = \int_{\Sigma} \int_{\Sigma} \left[ \gamma_{\Lambda_k}^{\Delta,\omega}(F^A_x|\sigma)
  - \gamma_{\Lambda_k}^{\Delta,\omega}(F^A_x|\sigma') \right] \mu_1 (d \sigma) \mu_2 ( d
  \sigma').
\end{gather*}
Then the proof will be done if, for each $x\in \Delta$, we construct
a sequence $\{\Lambda_k\}_{k\in \mathds{N}}\subset
\mathcal{V}_\Delta$ with the properties just mentioned, such that
\begin{equation}
  \label{Gc5}
 \sup_{\sigma, \sigma' \in \Sigma} \left|  \gamma^{\Delta,\omega}_{\Lambda_k} (F^A_x|\sigma) -
  \gamma^{\Delta,\omega}_{\Lambda_k }(F^A_x|\sigma')\right| \to 0, \qquad {\rm
 as} \
\  k\to +\infty.
\end{equation}
To proceed further, we recall that the family
$(h_\Lambda)_{\Lambda\in \mathcal{V}}$ is defined in (\ref{G6}) with
$(h_e)_{e\in {\sf E}}$ satisfying Assumption \ref{1ass}. In view of
this and the lower bound in (\ref{G7}), we introduce
\begin{equation}
  \label{Gc6}
  \bar{h}_e (\sigma_e) = \frac{1}{m_e} h_e (\sigma).
\end{equation}
For $r>0$, $x\in \Delta$ and $e_x\in {\sf E}_x$ as in (\ref{Gc1}),
(\ref{Gc3}), let $\mathcal{A}_{r}(e_x)$ be the corresponding
collection of animals in ${\sf L}({\sf H})$, see (\ref{Ga3}), and
$\{N_{k}\}_{k\in \mathds{N}}$ a sequence as in Definition
\ref{Ga1df}. Set
\begin{equation*}
  %\label{Gc6a}
  \Lambda_k = \Lambda_{x,N_k} \cap \Delta, \qquad \partial \Lambda_k
  = \{ z\in \Lambda_k^c : \exists e \ e\cap \Lambda_k \neq
  \varnothing \  {\rm and} \ z \in e\},
\end{equation*}
and also
\begin{equation*}
  %\label{Gc2}
  {\sf D}^o_{k} = \{ e: e\subset \Lambda_k\}, \qquad {\sf D}_{k} = \{ e
  : e \cap \Lambda_k \neq \varnothing \ {\rm and} \ e \cap \partial \Lambda_k \neq
  \varnothing\}.
\end{equation*}
For $e\in {\sf D}_{k}$, we have the following possibilities: (a)
$d(e, e_x)\leq N_k$; (b) $d(e, e_x)= N_k+1$. Note that $d(e, e_x)>
N_k+1$ is impossible as $e\cap\Lambda_{x,N_k} \neq \varnothing$. In
case (a), each $z\in e\setminus \Lambda_k$ lies in $\Delta^c$, as it
lies in $\Lambda_{x,N_k}$ and $z\in \Delta$ would imply $z\in
\Lambda_k$. In case (b), $z\in e$ can be in $\Lambda_k$,
$\Delta\setminus \Lambda_k$ and $\Delta^c$. Thus, for $e\in {\sf
D}_{k}$, we write $e = e_1 \cup e_2 \cup e_3$: $e_1=e\cap
\Lambda_k$, $e_2 = e\cap (\Delta \setminus \Lambda_k)$ and
$e_3=e\cap \Delta^c$. In accord with this, we also write
\begin{gather}
  \label{A11}
{\sf D}_{k} = {\sf D}_{2,k} \cup {\sf D}_{3,k}, \\[.2cm] \nonumber
{\sf D}_{2,k} =\{e\in {\sf D}_k:  e_2= e\cap (\Delta \setminus
\Lambda_k)\neq \varnothing \}, \quad {\sf D}_{3,k} = \{e\in {\sf
D}_k:  e_2= e\cap (\Delta \setminus \Lambda_k)= \varnothing \}.
\end{gather}
According to the analysis made above (case (b)), it follows that
\begin{equation}
  \label{A10}
{\sf D}_{2,k} \subset {\sf S}_{N_k+1}(e_x).
\end{equation}
Now by (\ref{A1}) we have
\begin{gather}
  \label{Gc7}
\gamma^{\Delta,\omega}_{\Lambda_k} (F^A_x|\sigma) =
\frac{1}{Z^{\Delta,\omega}_{\Lambda_{k}}(\sigma)} \int_{S^{
\Lambda_{k} }}
\mathds{1}_A(\xi_x) \prod_{e\in {\sf D}^o_{k}} \bar{h}_e (\xi_{e}) \prod_{e\in {\sf D}_{3,k}} \bar{h}_e (\xi_{e_1} \times \omega_{e_3}) \\[.2cm]
\nonumber \times \prod_{e\in{\sf D}_{2,k}} \bar{h}_e (\xi_{e_1}
\times \sigma_{e_2}\times \omega_{e_3}) \chi^{\Lambda_{k}} (d
\xi_{\Lambda_{k}}).
\end{gather}
Here $\xi_{e_1} \times \sigma_{e_2} \times \omega_{e_3}$ is the
configuration $\eta$ such that: $\eta_z = \xi_z$, $z\in e_1$;
$\eta_z = \sigma_z$, $z\in e_2$; $\eta_z = \omega_z$, $z\in e_3$.
And also
\begin{gather}
\label{Gc8} Z^{\Delta,\omega}_{\Lambda_k}(\sigma):= \int_{S^{
\Lambda_{k} }}\prod_{e\in {\sf D}^o_{k}} \bar{h}_e (\xi_{e'}) \prod_{e\in {\sf D}_{3,k}} \bar{h}_e (\xi_{e_1} \times \omega_{e_3}) \\[.2cm]
\nonumber \times \prod_{e\in{\sf D}_{2,k}} \bar{h}_e (\xi_{e_1}
\times \sigma_{e_2}\times \omega_{e_3}) \chi^{\Lambda_{k}} (d
\xi_{\Lambda_{k}}).
\end{gather}
To optimize our notations, for $e\in {\sf D}_k^o\cup {\sf D}_{3,k}$
by $\xi^\omega_e$ we denote $\xi_e$ if $e\in {\sf D}_k^o$, and
$\xi_{e_1}\times \omega_{e_3}$ if $e\in {\sf D}_{3,k}$. And also
\begin{equation*}
%  \label{Gc80}
  \breve{\sf D}_k := ( {\sf D}_k^o \cup {\sf D}_{3,k})\setminus
  \{e_x\}.
\end{equation*}
Then by means of (\ref{Gc7}) we write
\begin{eqnarray}
  \label{Gc9}
M_{x,N_k} (A) & := & \gamma^{\Delta,\omega}_{\Lambda_{k}}
(F^A_x|\sigma) - \gamma^{\Delta,\omega}_{\Lambda_{k}}(F^A_x|\sigma')
= \frac{1}{
Z^{\Delta,\omega}_{\Lambda_{k}}(\sigma) Z^{\Delta,\omega}_{\Lambda_{k}}(\sigma')}\\[.2cm]
\nonumber & \times & \int_{S^{ \Lambda_{k} }}\int_{S^{ \Lambda_{k}
}} \left[\mathds{1}_A(\xi_x) - \mathds{1}_A(\eta_x) \right]
\bar{h}_{e_x} (\xi^\omega_{e_x})\bar{h}_{e_x} (\eta^{\omega}_{e_x})
\prod_{e\in
\breve{\sf D}_{k} } \left(1 + \Gamma_e (\xi^\omega,\eta^\omega) \right) \\[.2cm]
\nonumber & \times & \Psi^\omega(\xi, \sigma; \eta, \sigma')
\chi^{\Lambda_{k}} (d \xi_{\Lambda_{k}}) \chi^{\Lambda_{k}} (d
\eta_{\Lambda_{k}}),
\end{eqnarray}
where
\begin{equation}
  \label{Gc10}
\Gamma_e (\xi^\omega,\eta^\omega) := \bar{h}_e
(\xi^\omega_e)\bar{h}_e (\eta^\omega_e) -1 \geq 0,
\end{equation}
see (\ref{Gc6}), and
\begin{equation}
  \label{Gc11}
\Psi^\omega(\xi, \sigma; \eta, \sigma') := \prod_{e\in {\sf
D}_{2,k}} \bar{h}_e (\xi_{e_1} \times \sigma_{e_2} \times
\omega_{e_3}) \bar{h}_e (\eta_{e_1} \times \sigma'_{e_2} \times
\omega_{e_3}).
\end{equation}
Now we observe that the term in the second line of (\ref{Gc9}) is
anti-symmetric with respect to the interchange $\xi \leftrightarrow
\eta$. Keeping this in mind we rewrite the product in this line in
the following form
\begin{gather}
  \label{Gc12}
\prod_{e\in \breve{\sf D}_{k} } \left(1 + \Gamma_e
(\xi^\omega,\eta^\omega) \right) = \sum_{{\sf C}\subset \breve{\sf
D}_{k}} \Gamma_{\sf
C} (\xi^\omega,\eta^\omega), \\[.2cm]
\nonumber \Gamma_{{\sf C}} (\xi^\omega,\eta^\omega) := \prod_{e\in
{\sf C}} \Gamma_{e} (\xi^\omega,\eta^\omega).
\end{gather}
Then set, cf. (\ref{A10}),
\begin{equation}
  \label{A12}
  {\sf D}^{-}_{2,k} =\{ e\in {\sf S}_{N_k} (e_x): e\sim e' \ {\rm
  for} \ {\rm some} \ e'\in {\sf D}_{2,k}\}.
\end{equation}
Let $\mathcal{C}$ denote the collection of subsets ${\sf C}\subset
\breve{\sf D}_{k}$ which satisfy: the graph ${\sf C}\cup\{e_x\}$ has
a connected component, say $\sf A$, such that ${\sf A} \cap {\sf
D}^{-}_{2,k}\neq \varnothing$. Now we plug the first line of
(\ref{Gc12}) in (\ref{Gc9}) and observe that the integral therein is
nonzero only whenever the sum is taken over ${\sf C}\in\mathcal{C}$,
which follows by the anti-symmetricity mentioned above. Indeed, the
just mentioned condition means that there exist $z\in e\cap e'$ with
$e\in {\sf D}^{-}_{2,k}$ and $e' \in {\sf D}_{2,k}$. If this $z$
lies in $\Delta$, then the corresponding $\xi_z$ and $\eta_z$ are
present in both $\Gamma_{\sf C}(\xi^\omega, \eta^\omega)$ and
$\Psi^\omega(\xi,\sigma;\eta,\sigma')$, see (\ref{Gc11}) and
(\ref{A11}), which destroys the mentioned anti-symmetricity.
Furthermore, the graph ${\sf A}$ contains a path, $\vartheta (e_x,
e)$, connecting $e_x$ to a certain $e\in {\sf S}_{N_k}(e_x)$, see
(\ref{A12}); hence,  $|\vartheta (e_x,e)|=:N \geq N_k$, which means
that this path belongs to $\varTheta_{N_k}^N(e_x)$ for some $N\geq
N_k$, see (\ref{Ga33}). Set
\begin{equation*}
  %\label{Gc14}
\varTheta_{k} = \{ \vartheta (e_x, e) :e \in {\sf D}^{-}_{2,k}\} .
\end{equation*}
Recall that ${\sf A}_\vartheta$ denotes the subgraph generated by
the vertices of $\vartheta$. Since some of ${\sf C}\in \mathcal{C}$
may contain several paths $\vartheta (e_x,e)\in \varTheta_k$, it
follows that
\begin{equation}
  \label{Gc15}
  \sum_{{\sf C}\in\mathcal{C}}  \Gamma_{\sf C} (\xi,\eta) \leq  \sum_{\vartheta \in \varTheta_k} \Gamma_{{\sf A}_\vartheta}
   (\xi^\omega,\eta^\omega)
  \sum_{{\sf C} \subset \breve{\sf D}_{k} \setminus {\sf A}_\vartheta} \Gamma_{{\sf C}}
  (\xi^\omega,\eta^\omega),
\end{equation}
for certain ${\sf A}_\vartheta$ may appear twice on the right-hand
side of (\ref{Gc15}): once in the first sum, and then as a subset of
${\sf C}$. We apply all these arguments in (\ref{Gc9}) and obtain
\begin{eqnarray}
  \label{Gc16}
  \left|M_{x,N_k} (A) \right| & \leq & \frac{2}{
Z^{\Delta,\omega}_{\Lambda_k}(\sigma)
Z^{\Delta,\omega}_{\Lambda_k}(\sigma')}\int_{S^{ \Lambda_{k}
}}\int_{S^{
\Lambda_{k} }} \bar{h}_{e_x}(\xi^\omega_{e_x})\bar{h}_{e_x}(\eta^\omega_{e_x})\\[.2cm] \nonumber &
\times &\left( \sum_{{\sf C} \in \mathcal{C}} \Gamma_{\sf C}
(\xi^\omega,\eta^\omega) \right) \Psi^\omega(\xi, \sigma; \eta,
\sigma') \chi^{\Lambda_{k}} (d \xi_{\Lambda_{k}}) \chi^{\Lambda_{k}}
(d \eta_{\Lambda_{k}}) \\[.2cm] \nonumber & \leq & \frac{2}{
Z^{\Delta,\omega}_{\Lambda_k}(\sigma)
Z^{\Delta,\omega}_{\Lambda_k}(\sigma')}\int_{S^{ \Lambda_{k}
}}\int_{S^{
\Lambda_{k} }} \bar{h}_{e_x}(\xi^\omega_{e_x})\bar{h}_{e_x}(\eta^\omega_{e_x})\\[.2cm] \nonumber &
\times & \left(\sum_{\vartheta\in \varTheta_k } \Gamma_{{\sf
A}_\vartheta} (\xi^\omega,\eta^\omega) \right)\left( \sum_{{\sf C}
\subset \breve{\sf D}_{k}\setminus {\sf A}_\vartheta} \Gamma_{{\sf
C}} (\xi^\omega,\eta^\omega) \right)
\Psi^\omega(\xi, \sigma; \eta, \sigma') \\[.2cm] \nonumber &
\times & \chi^{\Lambda_{k}} (d \xi_{\Lambda_{k}}) \chi^{\Lambda_{k}}
(d \eta_{\Lambda_{k}}).
\end{eqnarray}
The next step is to estimate the first two multipliers in the
penultimate line of (\ref{Gc16}). By (\ref{G8a}) and (\ref{Gc6}) it
follows that
\[
\bar{h}_e (\xi_e) \leq e^{\delta (e)}, \qquad {\rm for} \ {\rm all}
\ \xi_e \in S^e,
\]
which means that, see (\ref{Gc10}),
\begin{eqnarray}
  \label{Gc18}
  \Gamma_e (\xi^\omega, \eta^\omega)& \leq & e^{2 \delta (e)} -1 .
\end{eqnarray}
Since each ${\sf A}$ is in $\mathcal{A}_{N_k}(e_x)$, by
(\ref{Gc18}), (\ref{Gb2a}) and (\ref{Gb2b}) we get, see Definition
\ref{BBdf},
\begin{gather}
  \label{Gc19}
\sum_{\vartheta\in \varTheta_k } \Gamma_{{\sf A}_\vartheta}
(\xi^\omega,\eta^\omega) = \sum_{N=N_k}^\infty \sum_{\vartheta \in
\varTheta_{N_k}^N(e_x)}\prod_{e\in {\sf A}_\vartheta} \Gamma_e
(\xi^\omega,\eta^\omega)\\[.2cm] \nonumber \leq \sum_{N=N_k}^\infty  \sum_{\theta \in \varTheta_{N_k}^N(e_x)}
 \exp \bigg{(}N \mathfrak{D}(\vartheta)  \bigg{)} \leq \sum_{N=N_k}^\infty e^{-\epsilon} =
\frac{e^{-\epsilon N_k}}{e^\epsilon -1},
\end{gather}
where we have used also (\ref{Ga5}). At the same time
\begin{gather}
  \label{Gc20}
  \sum_{{\sf C} \subset
\breve{\sf D}_{k}\setminus {\sf A}_\vartheta} \Gamma_{{\sf C}}
(\xi^\omega,\eta^\omega) \leq \sum_{{\sf C} \subset \breve{\sf
D}_{k}} \Gamma_{{\sf C}} (\xi^\omega,\eta^\omega) = \prod_{e\in
\breve{\sf D}_{k}} \bar{h}_e (\xi^\omega_e)
\bar{h}_{e}(\eta^\omega_e).
\end{gather}
Now we use (\ref{Gc20}), (\ref{Gc19}) in (\ref{Gc16}), take into
account (\ref{Gc8}) and arrive at
\begin{equation*}
 % \label{Gc21}
  \left|M_{x,N_k} (A) \right|  \leq  2(e^\epsilon -1)^{-1} e^{- \epsilon N_k},
\end{equation*}
which yields (\ref{Gc5}) and thus completes the proof.

\subsection{The proof  of the theorems}

We begin by proving our main statement.
\newline \noindent {\it Proof of Theorem \ref{1tm}.}
The uniqueness in question follows as a particular case of Lemma
\ref{1lm} corresponding to $\Delta ={\sf V}$. The proof of the
global Markov property of the unique $\mu \in \mathcal{G}(\gamma)$
follows by F\"ollmer's arguments \cite[pages 266, 267]{F}, which we
repeat here for the reader's convenience.

Since $(\Sigma, \mathcal{F})$ is a standard Borel space, for $\mu
\in \mathcal{G}(\gamma)$ one may get
\begin{equation}
  \label{A4}
\mu^{\Delta^c,\omega} (A) = \mu(A|\mathcal{F}_{\Delta^c})(\omega) ,
\quad \mu^{\partial \Delta,\omega} (A) = \mu(A|\mathcal{F}_{\partial
\Delta})(\omega) , \qquad A \in \mathcal{F}_\Delta.
\end{equation}
Let $F:\Sigma \to \mathds{R}$ and $G_\Lambda:\Sigma \to \mathds{R}$,
$\Lambda \in \mathcal{V}_\Delta$, be bounded and: $F$ is
$\mathcal{F}_{\Delta^c}$-measurable; $G_\Lambda$ is
$\mathcal{F}_{\Lambda}$-measurable. Then
\begin{gather}
  \label{A5}
  \mu(F G_\Lambda) = \int_{\Sigma} F(\omega)
  \mu^{\Delta^c,\omega}(G_\Lambda) \mu ( d \omega).
\end{gather}
At  the same time, by (\ref{A1}) we have
\begin{gather}
  \label{A6}
  \int_{\Sigma}F(\omega)\left(( \mu^{\Delta^c,\omega} \gamma^{\Delta,\omega}_\Lambda )(G_\Lambda)
  \right) \mu(d\omega) =   \int_{\Sigma}F(\omega)\left((\mu^{\Delta^c,\omega} \gamma_\Lambda)
  (G_\Lambda)\right)(\omega) \mu(d \omega) \\[.2cm] \nonumber =  \int_{\Sigma}F(\omega)\left((\mu^{\Delta^c,\omega}
  (G_\Lambda)\right)(\omega) \mu(d \omega) = \mu(F G_\Lambda).
\end{gather}
As $F$ one may take any $\mathcal{F}_{\Delta^c}$- measurable
functions; thus, (\ref{A5}) and (\ref{A6}) imply that
$\mu^{\Delta^c,\omega}$ satisfies (\ref{A2}), which yields
$\mu^{\Delta^c,\omega}\in \mathcal{G}(\gamma^{\Delta^c,\omega})$.
However, by repeating the same steps with $\mathcal{F}_{\partial
\Delta}$-measurable functions $F$ one gets that $\mu^{\partial
\Delta,\omega}\in \mathcal{G}(\gamma^{\Delta^c,\omega})$, which by
(\ref{A4}) gives the global Markov property in question. \hfill
$\square$
\newline \noindent {\it Proof of Theorem \ref{2tm}.}
For random $h_e$, the quantity introduced in (\ref{Gc9}) is a random
variable. Thus, for  fixed $x$ and $A$, we have a sequence of random
variables
\begin{equation}
  \label{Ab1} X^K_k = |M_{x,N_k}(A)|,
\end{equation}
where we indicate the dependence on $K$. Recall that $\delta(e) = K
c_e$ in this case. Our aim is to show that this sequence is almost
surely asymptotically degenerate at zero for each $x$ and $A$. By
(\ref{Gc16}) we get
\begin{gather}
  \label{Ab2} X_k^K \leq \sum_{\vartheta \in \varTheta_k}
  \prod_{e\in {\sf A}_\vartheta} \left(e^{2K c_e}-1 \right).
\end{gather}
According to the assumed i.i.d. of $c_e$, and also to (\ref{c-e}),
the map
\[
K \mapsto \tau(K):= \mathds{E}\left( e^{2Kc_e }-1\right)
\]
is increasing and continuous (by the dominated convergence theorem),
and satisfies $\tau(0) =0$. Then either $\tau(K) < e^{-\bar{a}}$ for
all $K>0$, or there exists a unique $K_*>0$ such that
$\tau(K_*)=e^{-\bar{a}}$ and $\tau(K)<e^{-\bar{a}}$ for $K< K_*$.
Assuming $K_* =+\infty$ in the former case, we have that the
following holds, see (\ref{Gc19}), (\ref{Ab2}) and Remark
\ref{G1rk},
\begin{gather*}
%  \label{Ab3}
 \mathds{E}X^K_k \leq \sum_{N=N_k}^\infty \sum_{\vartheta \in
 \varTheta^N_{N_k}(e_x) } \left[ \tau(K)\right]^N \leq
 \sum_{N=N_k}^\infty  \left[e^{\bar{a}} \tau(K)\right]^N = \frac{[e^{\bar{a}} \tau(K)]^{N_k}}{1-e^{\bar{a}}
 \tau(K)}, \qquad K< K_*.
\end{gather*}
Hence, the sequence $\{X_k^K\}$  is asymptotically degenerate at
zero in mean. Then it contains a subsequence, which converges to
zero almost surely, see, e.g., \cite[Theorem 3.4]{Gut}. In view of
Proposition \ref{1pn}, this yields the uniqueness in question. The
almost sure global Makovianess of the unique $\mu$ is then obtained
as in the prof of Theorem \ref{1tm}. \hfill $\square$.

%\begin{acks}[Acknowledgments]
% The authors would like to thank both referees for valuable comments and suggestions.
%\end{acks}

%\begin{funding}
 %The second author was supported in part by the Deutsche Forschungsgemeinschaft (DFG) through the SFB 1238 "Taming uncertainty and
%profiting from randomness and low regularity in analysis, statistics
%and their applications".
%\end{funding}

\section*{Acknowledgment}
The second author was supported in part by the Deutsche
Forschungsgemeinschaft (DFG) through the SFB 1238 "Taming
uncertainty and profiting from randomness and low regularity in
analysis, statistics and their applications".

v

\end{document}